\newtheorem{theorem}[equation]{Theorem}
\newtheorem{prop}[equation]{Proposition}
\newtheorem{lemma}[equation]{Lemma}
\newtheorem{cor}[equation]{Corollary}
\newtheorem{question}[equation]{Question}
\theoremstyle{remark}
\theoremstyle{definition}
\newtheorem*{CNT}{Courant's nodal domain theorem}
\newtheorem*{EI}{The Euler inequality}
\numberwithin{equation}{subsection}
\theoremstyle{remark}
\newtheorem*{remark*}{Remark}
\newtheorem*{example*}{Example}
\newtheorem*{quest*}{Question}
\DeclareMathAlphabet{\matheur}{U}{eur}{m}{n}
\DeclareMathAlphabet{\matheus}{U}{eus}{m}{n}
\DeclareMathAlphabet{\matheuf}{U}{euf}{m}{n}
\newcommand{\abs}[1]{\left\lvert#1\right\rvert}
\DeclareMathOperator{\ord}{ord}
\author{Mikhail Karpukhin}
\address{Department of Geometry and Topology, Moscow State University, Leninskie Gory, GSP-1, 119991, Moscow, Russia}
\address{Independent University of Moscow, Bolshoy Vlasyevskiy pereulok
11, 119002, Moscow, Russia}
\email{karpukhin@mccme.ru}
\author{Gerasim  Kokarev}
\address{Mathematisches Institut der Universit\"at M\"unchen, Theresienstr. 39, D-80333 M\"unchen, Germany}
\email{Gerasim.Kokarev@mathematik.uni-muenchen.de}
\author{Iosif Polterovich}
\address{D\'epartement de math\'ematiques et de statistique, Universit\'e de Montr\'eal, CP 6128 succ Centre-Ville, Montr\'eal, QC H3C 3J7, Canada}
\email{iossif@dms.umontreal.ca}
\title{Multiplicity bounds for Steklov eigenvalues on Riemannian surfaces}
 \subjclass[2010]{58J50, 35P15, 35J25} \keywords{Steklov problem, eigenvalue multiplicity, Riemannian surface. }
\begin{document}

\begin{abstract} 
We prove two  explicit  bounds for the multiplicities of Steklov eigenvalues $\sigma_k$ on  compact surfaces with boundary.  One of the bounds depends only  on the genus of a surface and the index $k$ of an eigenvalue, while the other depends as well on  the number of boundary components. We also show that on any given Riemannian surface with smooth boundary the multiplicities of Steklov eigenvalues $\sigma_k$ are uniformly bounded in $k$. 
\end{abstract}
\maketitle


\section{Introduction and main results}
\subsection{Multiplicity bounds for Laplace eigenvalues}
Let $M$ be a smooth closed surface. For a Riemannian metric $g$ on $M$ we denote by
$$
0=\lambda_0(g)<\lambda_1(g)\leqslant\ldots\lambda_k(g)\leqslant\ldots
$$
the eigenvalues of the Laplace operator $-\Delta_g$. A classical result by Cheng in~\cite{Cheng} says that the multiplicities $m_k(g)$ of these eigenvalues are bounded by quantities depending on the genus $\gamma$ of $M$ only. Cheng's bound was sharpened by Besson~\cite{Be} for orientable surfaces, and by Nadirashvili~\cite{Na} in the general case, to the following estimate for multiplicities:
\begin{equation}
\label{laplace}
m_k(g)\leqslant 2(2-\chi)+2k+1,\qquad  k=1,2\ldots,
\end{equation}
where $\chi$ is the Euler-Poincar\'e number of $M$. If $M$ is  homeomorphic to a sphere or a projective plane, inequality~\eqref{laplace} is sharp for $k=1$.

The purpose of this paper is  to prove multiplicity bounds  for boundary value problems on Riemannian surfaces.   We are essentially concerned with the Steklov eigenvalue problem, for which multiplicity bounds are known only in the case of simply connected domains, see~\cite{Al}.  For the Dirichlet and Neumann boundary value problems the multiplicity bounds are due to~\cite{Na, HoNa}, where the authors also consider simply connected domains only. At the end of the paper we discuss versions of these results for arbitrary Riemannian surfaces with boundary, as well as for more general eigenvalue problems.
\subsection{Steklov eigenvalue problem}
From now on let  $(M,g)$ be a smooth compact Riemannian surface with a non-empty boundary. For a given bounded non-negative function $\rho$ on the boundary $\partial M$ the Steklov eigenvalue problem is  stated as:
\begin{equation}
\label{steklov}
\Delta_g u=0\quad\text{ in }M,\quad\text{and}\quad \frac{\partial u}{\partial\nu}=\sigma\rho u\quad\text{ on }\partial M,
\end{equation}
where $\nu$ is an outward unit normal. Denote by $\mu$ an absolutely continuous  measure on the boundary $\partial M$ with the density $\rho$, that is $d\mu=\rho\mathit{ds}_g$. The real numbers $\sigma$ for which a  nonzero  solution above exists are  eigenvalues of the Dirichlet form $\int\abs{\nabla u}^2\mathit{dVol}_g$ in the space $L_2(M,\mu)$. Its spectrum is non-negative and discrete, see~\cite{Ba}, and  we denote by
$$
0=\sigma_0(g,\mu)<\sigma_1(g,\mu)\leqslant\ldots\sigma_k(g,\mu)\leqslant\ldots
$$
the corresponding eigenvalues. This eigenvalue problem was considered in $1902$ by Steklov and since then has been studied extensively; we refer to~\cite{Ba} and the recent papers~\cite{GP,FS} for a comprehensive list of references on the subject.
\subsection{Main results} 
Our main result is the following theorem.
\begin{theorem}
\label{t1}
Let $(M,g)$ be a compact Riemannian surface with a non-empty boundary, and $\mu$ be an absolutely continuous Radon measure on $\partial M$ whose density is bounded. Then the multiplicity $m_k(g,\mu)$ of the Steklov eigenvalue $\sigma_k(g,\mu)$ satisfies the inequalities
\begin{equation}
\label{in1}
m_k(g,\mu)\leqslant 2(2-\bar\chi)+2k+1,
\end{equation}
\begin{equation}
\label{in2}
m_k(g,\mu)\leqslant 2(2-\bar\chi)+2l+k,  
\end{equation}
for all $k=1,2\dots$,  where $\bar\chi=\chi+l$, and $\chi$ and $l$ stand for the Euler-Poincar\'e number and the number of boundary components of $M$ respectively. Besides, inequality~\eqref{in2} is strict for an even $k$.
\end{theorem}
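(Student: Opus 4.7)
The plan is to adapt the Besson--Nadirashvili proof of the Laplace multiplicity bound \eqref{laplace} to Steklov eigenfunctions. The three main ingredients are: (i) a dimension-count producing a $\sigma_k$-eigenfunction $u$ with high vanishing order at a chosen base point $p$; (ii) a local description of the nodal set $N(u)$ near $p$ and near $\partial M$; and (iii) a global Euler-inequality count for $N(u)$ on the closed surface obtained by capping off $\partial M$, combined with Courant's nodal domain theorem.

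For ingredient (i), note that every eigenfunction is harmonic on $M$. At an interior point the space of $(n-1)$-jets of harmonic functions has dimension $2n-1$ (one constant, plus two harmonic polynomials in each positive degree), so within the $m_k$-dimensional eigenspace we can find a nonzero $u$ vanishing at $p$ to order $n=\lfloor m_k/2\rfloor$. For a boundary point an analogous count, in which the Steklov condition $\partial_\nu u=\sigma\rho u$ provides one additional linear constraint per jet order, yields an even higher vanishing order relative to $m_k$.

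For ingredient (ii), since $u$ is harmonic near an interior zero of order $n$ we may write $u=\mathrm{Re}(f)$ locally for a holomorphic $f$ with a zero of order $n$, so that $N(u)$ is the union of $2n$ real-analytic arcs crossing transversally at $p$. A parallel description applies near a boundary zero using real-analyticity of $u$ up to $\partial M$. Globally $N(u)$ is a finite embedded graph in $M$, and by Courant's theorem its complement has at most $k+1$ components.

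For ingredient (iii), cap each boundary circle of $M$ with a disk to obtain a closed surface $\widehat M$ of Euler characteristic $\bar\chi$, and view $N(u)\cup\partial M$ as an embedded graph on $\widehat M$. Applying $V-E+F\geqslant\bar\chi$ and expressing $V,E,F$ in terms of $n,k,l$ produces a bound of the shape $n\leqslant k-\bar\chi+2$ in the interior case, which rearranges to \eqref{in1}, and a bound with the boundary components contributing linearly in the boundary case, which rearranges to \eqref{in2}. The strict inequality for even $k$ in \eqref{in2} should follow from a parity obstruction: in the hypothetical equality case the graph $N(u)\cup\partial M$ acquires an odd Euler contribution that is incompatible with an even number of nodal arcs at $p$. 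The main technical obstacle is the local nodal analysis at a boundary zero: the Steklov condition is of Robin type and does not admit a direct harmonic reflection across $\partial M$ as in the Dirichlet or Neumann settings, so the branching structure of $N(u)$ at $\partial M$ must be read off from the real-analytic expansion of $u$ up to the boundary rather than from a symmetry argument.
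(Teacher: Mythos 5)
Your overall strategy for inequality~\eqref{in1} --- produce an eigenfunction of high vanishing order at an interior point via a jet-space dimension count, then combine an Euler-inequality count for the nodal graph on a capped/collapsed closed surface with Courant's theorem --- is exactly the paper's route, and your ingredient~(i) at an interior point is Lemma~\ref{l4}. However, there are two genuine gaps. First, you assert that ``globally $N(u)$ is a finite embedded graph in $M$.'' This is precisely what must be proved and is a central technical point of the paper (Lemma~\ref{l1}): there exist harmonic functions on surfaces with boundary whose nodal graphs are infinite, and since $\mu$ is only assumed to have bounded density, the eigenfunction has no useful regularity or local model at $\partial M$ from which finiteness could be read off. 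The paper establishes finiteness purely topologically, by resolving interior vertices of the reduced nodal graph and showing that an infinite vertex degree or infinitely many interior vertices would force more than $k+1$ faces, contradicting Courant. Without some such argument your Euler count in step~(iii) is not justified. Second, your entire mechanism for \eqref{in2} rests on a boundary base point, a jet count using the Steklov condition as ``one constraint per jet order,'' and a ``parallel description'' of $N(u)$ near a boundary zero via real-analyticity up to $\partial M$. None of this is available here: $\rho$ is merely bounded, and even for smooth $\rho$ no local model for the nodal set of a Steklov eigenfunction at boundary points is known --- you correctly flag this as the main obstacle, but the proposal does not overcome it, so \eqref{in2} is not proved.

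The paper's proof of \eqref{in2} avoids the boundary entirely and is worth comparing. It still uses an \emph{interior} base point $x$, but improves the nodal-domain lower bound by exploiting a Steklov-specific global fact: every nodal domain of a harmonic function meets $\partial M$ (maximum principle). Decomposing the component of $x$ in $\mathcal N(u)$ into the union $\Gamma_1$ of circuits through $x$ (plus connecting paths) and a tree $\Gamma_2$ of arcs running from $x$ to the boundary, one finds that each face of $\Gamma_1$ must contain a whole boundary component, so $\Gamma_1$ has at most $l$ faces and hence $\deg_{\Gamma_1}(x)\leqslant 2l+2-2\bar\chi$ (Lemma~\ref{l22}); the Euler inequality then gives at least $2\ord_x(u)+2\bar\chi-2l-2$ nodal domains (Lemma~\ref{l2}), which yields \eqref{in2}. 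Finally, the strictness for even $k$ is not a topological parity obstruction on the capped graph as you suggest: it is simply the integer part $\lfloor (k+1)/2\rfloor$ in the dimension count of Lemma~\ref{l4}, which for even $k$ already produces the contradiction at equality.
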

Note that $\bar\chi$ depends  on the genus $\gamma$  of $M$ only. More precisely, it equals $2-2\gamma$ for orientable surfaces and  $2-\gamma$ for non-orientable ones. Both inequalities above are similar to the Besson-Nadirashvili multiplicity bounds on closed Riemannian surfaces. The right hand-side of~\eqref{in1}  is the same function of the genus of $M$ as in~\eqref{laplace}. This bound does not depend on any boundary data and, as we show in Section~\ref{other}, holds for other boundary value problems. The second inequality can be re-written in the form
$$
m_k(g,\mu)\leqslant 2(2-\chi)+k.
$$
It is specific to the Steklov problem, and for $k\geqslant 2l$ is sharper than the first one. The proofs of both inequalities  are built on the ideas due to~\cite{Na, HoNa} and use the properties of nodal graphs. In comparison with other classical boundary  value problems,  there is an additional difficulty related to the fact that there is no known local model for the nodal set  of  a Steklov eigenfunction at the boundary points. 
In particular,  one has to show that the nodal graph is finite, see Lemma~\ref{l1}.

The statement that inequality~\eqref{in2} is strict for an even $k$ is a consequence of our method, see Section~\ref{proofs}. Under an additional topological hypothesis on a surface $M$, it is strict for any $k\geqslant 1$.
\begin{theorem} 
\label{t2}
Under the hypotheses of Theorem~\ref{t1}, suppose that $M$ is not homeomorphic to a disk. Then inequality~\eqref{in2} is strict for any $k\geqslant 1$.
\end{theorem}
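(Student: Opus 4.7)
The plan is to strengthen the nodal-graph argument underlying Theorem~\ref{t1} by identifying a slack quantity that is strictly positive whenever $M$ is not a disk. Let $u$ be an eigenfunction in the $\sigma_k$-eigenspace of dimension $m=m_k(g,\mu)$, chosen after imposing linear conditions to vanish to maximal order $N$ at a distinguished point $p$. The graph $\Gamma=Z(u)\cup\partial M$ is finite by Lemma~\ref{l1}, its faces are exactly the nodal domains of $u$, and Courant's nodal domain theorem yields $F\leqslant k+1$. Combining $V-E+F=\chi(M)$ with valency bounds at the vertices of $\Gamma$ (a bouquet of $2N$ arcs at an interior $p$, or $N$ arcs at a boundary $p$, together with transversal interior crossings and boundary-incident points) reproduces \eqref{in2}.

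I would then trace the equality case. Equality forces every face of $\Gamma$ to be a topological disk, every interior vertex to have minimal valency $4$, every boundary-incident vertex to have minimal valency $3$, and $N$ to equal its maximal admissible value. Pushing these equalities back through Euler's formula leaves a nonnegative integer residue $\delta=\delta(M,u)$ whose vanishing characterises equality in \eqref{in2}. A parity count of the sign-changes of $u\vert_{\partial M}$ on each boundary component---exactly the computation that gives strictness for even $k$ in Theorem~\ref{t1}---shows that $\delta$ and $k$ have opposite parity, so for odd $k$ the condition $\delta=0$ can hold only if a further topological contribution vanishes.

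The final step is to show that this topological contribution is strictly positive when $M$ is not a disk. If $l\geqslant 2$, by choosing $p$ on a distinguished boundary component $C_1$ one sees that each remaining component $C_i$, $i\geqslant 2$, must bound a nodal domain of its own, producing at least $l-1$ additional valency-or-face contributions beyond those allowed in the equality regime, so $\delta\geqslant l-1\geqslant 1$. If $l=1$ but the genus is positive, the equality regime (disk faces, minimal valencies) would force the cellular decomposition of $M$ defined by $\Gamma$ to have the Euler characteristic of a disk, contradicting $\chi(M)<1$. In either case $\delta\geqslant 1$, and \eqref{in2} becomes strict.

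The main obstacle I anticipate is the boundary analysis at points of $Z(u)\cap\partial M$: Steklov eigenfunctions admit no explicit local model at the boundary, so the valency of $\Gamma$ at a boundary vertex and the total number of sign-changes of $u\vert_{\partial M}$ cannot be read off from a series expansion. Both must be controlled purely combinatorially, through Lemma~\ref{l1} and the Steklov equation itself. Orchestrating this with the simultaneous tracking of parity and topological slack, without losing more than one unit in the valency accounting, is the technical heart of the argument.
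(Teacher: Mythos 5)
There is a genuine gap: the equality case of \eqref{in2} for odd $k$ is combinatorially \emph{consistent}, so no amount of Euler-formula, valency, or parity bookkeeping applied to a single eigenfunction can rule it out. The paper's analysis (Lemma~\ref{aux}) shows exactly what equality forces: the nodal graph consists of one interior vertex $x$ of order $n$, precisely $l+1-\bar\chi$ loops at $x$, and $k+1$ arcs from $x$ to the boundary, with every face simply connected and each face of the loop-subgraph containing exactly one boundary component. This is a perfectly realizable cell structure on $\bar M$; your two proposed contradictions do not materialize. For $l\geqslant 2$, the ``extra'' faces containing the components $C_2,\dots,C_l$ are not additional slack --- they are precisely what produces the $2l$ term in \eqref{in2} (via $f_1\leqslant l$ in Lemma~\ref{l22}), so your claim $\delta\geqslant l-1$ double-counts. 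For $l=1$ and positive genus, equality in the Euler inequality means $\Gamma$ is the $1$-skeleton of a cell decomposition of the closed surface $\bar M$ with $v-e+f=\bar\chi$; nothing forces the Euler characteristic of a disk, and there is no contradiction with $\chi(M)<1$. (Also, the strictness for even $k$ in Theorem~\ref{t1} comes from the integer-part rounding in Lemma~\ref{l4} --- one needs an even number $2n$ of independent functions to force vanishing order $n$ --- not from a parity count of sign changes of $u\vert_{\partial M}$.)

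The missing idea is dynamical rather than combinatorial: under the equality hypothesis one can extract \emph{two} linearly independent eigenfunctions $u_0,u_1$ both vanishing to the maximal order $n$ at $x$, and the family $u_t=u_0\cos nt+u_1\sin nt$ rotates the $2n$ nodal rays at $x$ while preserving the rigid structure above (one vertex, constant number of loops, maximal number of nodal domains), hence defines an isotopy of the nodal graph to itself. Since $M$ is not a disk there is at least one loop ($l+1-\bar\chi\geqslant 1$) and at least one boundary arc ($k+1\geqslant 2$), and the rotation carries a ray lying on a loop to a ray lying on an arc --- impossible under an isotopy in which loops deform to loops. That step, absent from your proposal, is where the hypothesis that $M$ is not a disk actually does its work.
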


\noindent
The proof of Theorem~\ref{t2} is based on the careful analysis of the equality case; it appears in Section~\ref{proofs2}. The key ingredient is an isotopy argument for nodal graphs, similar to the one in~\cite{HoNa1,HoNa}.

\subsection{Discussion}
For a disk, Alessandrini and Magnanini proved in~\cite{Al} the bound $m_k(g,\mu)\leqslant 2k$, which is sharp for the first eigenvalue. In comparison, our inequality \eqref{in2} shows that $m_k(g,\mu)\leqslant k+2$ for an odd $k$ and $m_k(g,\mu)\leqslant k+1$ for an even~$k$. 

We emphasise two cases when our results give sharp bounds, as follows from the results in~\cite{FS, FS1}.
\begin{cor} 
\label{sharp}
Under the hypotheses of Theorem~\ref{t1},
\begin{itemize}
\item [(i)] if $M$ is homeomorphic to an orientable surface of zero genus with $l \geqslant 2$ boundary components, then the multiplicity of the first non-zero Steklov eigenvalue is at most three;
\item [(ii)] if $M$ is  homeomorphic to a M\"obius band, then the multiplicity of the first non-zero Steklov eigenvalue is at most four.
\end{itemize}
\end{cor}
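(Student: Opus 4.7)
The plan is to derive both parts of the corollary directly by substituting the appropriate topological invariants into the inequalities of Theorems \ref{t1} and \ref{t2}; the sharpness claim then amounts to invoking the explicit extremal/maximising metrics constructed in \cite{FS,FS1}, which the corollary does not have to reprove.

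For part (i), the surface is orientable of genus zero, so $\bar\chi = 2-2\gamma = 2$ and $2-\bar\chi = 0$. Applying inequality \eqref{in1} at $k=1$ gives
$$
m_1(g,\mu)\leqslant 2(2-\bar\chi)+2+1 = 3,
$$
which is the asserted bound. Note that \eqref{in2} produces the weaker estimate $2l+1\geqslant 5$ when $l\geqslant 2$, so it is not needed here, and for the same reason Theorem \ref{t2} plays no role in this case.

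For part (ii), the M\"obius band is non-orientable of genus $\gamma=1$, hence $\bar\chi = 2-\gamma = 1$ and $l=1$; both \eqref{in1} and \eqref{in2} at $k=1$ yield only $m_1(g,\mu)\leqslant 5$. To reach the sharp value $4$, I would invoke Theorem \ref{t2}: since the M\"obius band is not homeomorphic to a disk, inequality \eqref{in2} is strict, which forces $m_1(g,\mu)\leqslant 4$.

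There is essentially no obstacle beyond careful bookkeeping of the topological invariants. The only subtle point is that the M\"obius band's non-orientability, rather than the number of boundary components, is what produces $\bar\chi=1$, and it is precisely the hypothesis ``not a disk'' in Theorem \ref{t2} that activates the strict inequality needed to drop from $5$ to $4$. The matching lower bound, i.e.\ the existence of metrics $(g,\mu)$ where these multiplicities are attained, is taken directly from the constructions in \cite{FS,FS1}.
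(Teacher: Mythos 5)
Your proposal is correct and follows exactly the paper's own route: part (i) is inequality \eqref{in1} with $\bar\chi=2$ at $k=1$, and part (ii) is the strictness of \eqref{in2} from Theorem~\ref{t2} applied to the M\"obius band ($\bar\chi=1$, $l=1$), with the attainment statements deferred to \cite{FS,FS1}. The arithmetic and the identification of the topological invariants are all consistent with the paper.
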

For a M\"obius band ($\chi=0$, $l=1$) the inequality  $m_1(g,\mu)\leqslant 4$ follows from Theorem~\ref{t2}. The equality is attained at a ``critical'' M\"obius band explicitly described in~\cite{FS1}. For an annulus ($\chi=0$, $l=2$)  the inequality $m_1(g,\mu)\leqslant 3$ follows from \eqref{in1}. The equality is attained at a ``critical'' catenoid constructed in \cite{FS}.  More generally, as was recently shown in \cite{FS1}, on any orientable surface of zero genus with $l\geqslant 2$ boundary components,  there exists a metric admitting a minimal embedding to a $3$-dimensional unit ball by first Steklov eigenfunctions. In particular, the first non-zero Steklov eigenvalue of such a metric has multiplicity three, and inequality~\eqref{in1} is also sharp on these surfaces.

 In general, Theorems \ref{t1} and \ref{t2} do not  give sharp multiplicity bounds. It is an interesting question to understand whether $m_k(g,\mu)$ is uniformly bounded in all parameters; see Section \ref{asbo}. More specifically, one may ask the following question, cf.~\cite[Question 1.8]{GP1}:
\begin{question} Does  there exist a sequence of surfaces $(M_n,g_n)$ with boundary measures $\mu_n$ such that $m_1(g_n,\mu_n) \to \infty$ as $n\to +\infty$ ?
\end{question}
If such a sequence exists, by inequality~\eqref{in1} the corresponding genera $\gamma_n$ of $M_n$ tend to infinity. Note also that  the answer to an analogous question  for the multiplicity of the first  Laplace eigenvalue is positive~\cite{CB, CdV}. 

\medskip
\noindent
{\em Remark.} While the present paper was at the final stage of preparation, a  different proof of  inequality~\eqref{in1} for orientable surfaces (and, consequently, of part (i) of Corollary~\ref{sharp}) appeared in~\cite{FS1,Jammes}. The approaches behind all the proofs go back to the ideas of Cheng and Besson.  At the same time, our proof that the nodal graph is finite is different from the one in~\cite{FS1}: it is based on a topological argument and uses only Courant's nodal domain theorem. Besides, it applies to general boundary measures $\mu$, see Lemma~\ref{l1} and the discussion in Section~\ref{other}. Note also that for non-orientable surfaces, inequality~\eqref{in1} is sharper than the bound in~\cite{FS1,Jammes}.
\subsection{Asymptotic bounds for Steklov eigenvalues}
\label{asbo}
Suppose that the boundary $\partial M$ is smooth, and the weight function $\rho$ in~\eqref{steklov} is smooth and strictly positive. Then the Steklov eigenvalues can be viewed as the eigenvalues of a self-adjoint elliptic pseudo-differential operator of the first order; it sends a function on $\partial M$ to the normal derivative of its harmonic extension multiplied by $\rho^{-1}$.  In particular,  for $\rho \equiv 1$ this pseudo-differential operator is precisely  the Dirichlet-to-Neumann operator on $\partial \Omega$.  Using H\"ormander's theorem on spectral asymptotics for pseudo-differential operators \cite{Ho}, we obtain the following result.
\begin{theorem} 
\label{as1}
Let $(M,g)$ be a compact Riemannian surface with a smooth boundary and $\mu$ be a measure on $\partial M$ whose density $\rho$ is smooth and strictly positive. Then the multiplicities $m_k(g,\mu) $ of Steklov eigenvalues are uniformly bounded in $k$, i.e. there exists a constant $C_{g,\mu}$, depending on a metric $g$ and a measure
$\mu$,  such that 
$$
m_k(g,\mu)\leqslant C_{g,\mu}\qquad\text{ for all}\quad k=1,2,\dots.
$$ 
\end{theorem}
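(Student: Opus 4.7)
The plan is to identify the Steklov spectrum with that of a single first-order self-adjoint elliptic pseudo-differential operator on the closed one-manifold $\partial M$, and then to read off the boundedness of multiplicities from H\"ormander's sharp Weyl law.

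First, I would express the Steklov problem through the Dirichlet-to-Neumann operator $\Lambda_g\colon C^\infty(\partial M)\to C^\infty(\partial M)$ sending $f$ to $\partial_\nu(\mathcal{H}f)$, where $\mathcal{H}f$ is the harmonic extension of $f$ into $M$. The boundary trace $f=u|_{\partial M}$ of a Steklov eigenfunction $u$ with eigenvalue $\sigma$ satisfies $\Lambda_g f=\sigma\rho f$, so the Steklov spectrum coincides with the spectrum of $\rho^{-1}\Lambda_g$ on $L^2(\partial M,\mu)$. Since $\rho$ is smooth and strictly positive, the map $u\mapsto\rho^{1/2}u$ is a unitary equivalence from $L^2(\partial M,\mu)$ onto $L^2(\partial M,ds_g)$ that conjugates $\rho^{-1}\Lambda_g$ into
$$
A=\rho^{-1/2}\,\Lambda_g\,\rho^{-1/2}.
$$
The operator $A$ is a classical self-adjoint pseudo-differential operator of order one on $\partial M$, elliptic with positive smooth principal symbol $\rho^{-1}(x)\,|\xi|_g$, and its spectrum reproduces the Steklov spectrum with matching multiplicities.

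Next, I would invoke H\"ormander's theorem on spectral asymptotics~\cite{Ho}: for a positive self-adjoint elliptic pseudo-differential operator of order $m$ on a compact $n$-dimensional manifold, the counting function satisfies
$$
N_A(\sigma)=c_{g,\mu}\,\sigma^{n/m}+O\bigl(\sigma^{(n-1)/m}\bigr).
$$
In our situation $n=m=1$, so the remainder is $O(1)$. Hence there is a constant $C_{g,\mu}$ such that the deviation $R(\sigma):=N_A(\sigma)-c_{g,\mu}\,\sigma$ is bounded by $C_{g,\mu}/2$ in absolute value for every $\sigma\geqslant 0$. The multiplicity $m_k(g,\mu)$ equals the jump $N_A(\sigma_k^+)-N_A(\sigma_k^-)=R(\sigma_k^+)-R(\sigma_k^-)$, which is at most $C_{g,\mu}$ independently of $k$.

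The only substantive step is to verify the hypotheses of H\"ormander's theorem, which is immediate here: $\partial M$ is a smooth closed one-manifold (a disjoint union of circles), and $A$ is by construction a first-order self-adjoint classical elliptic pseudo-differential operator with smooth positive principal symbol. The argument works precisely because in dimension one the Weyl remainder exponent $(n-1)/m$ vanishes; in higher dimensions the same reasoning would yield only $m_k=O\bigl(\sigma_k^{(n-1)/m}\bigr)$, which is unbounded.
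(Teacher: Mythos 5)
Your proposal is correct and follows essentially the same route as the paper: both reduce the Steklov problem to a first-order self-adjoint elliptic pseudo-differential operator on $\partial M$ (the paper uses the weighted operator $\rho^{-1}\partial\hat u/\partial\nu$ directly, while you symmetrize to $\rho^{-1/2}\Lambda_g\rho^{-1/2}$, a harmless variation), and both then apply H\"ormander's theorem to get a Weyl law with bounded remainder in dimension one, so that each multiplicity is controlled by the oscillation of the remainder. No gaps.
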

The version of this result for Laplace eigenvalues is well-known, see \cite{HoNa1}; in that case Weyl's law with a sharp remainder estimate implies that $m_k(g)=O(\sqrt{k})$ as $k\to +\infty$.

When $M$ is a disk, Theorem~\ref{as1} can be strengthened to the following statement.
\begin{prop} 
\label{as2}
Under the hypotheses of Theorem~\ref{as1}, suppose that $M$ is homeomorphic to a disk. Then there exists an integer $K_{g,\mu}>0$, depending on a metric $g$ and a measure $\mu$, such that $m_k(g, \mu)\leqslant 2$ for all $k>K_{g,\mu}$.
\end{prop}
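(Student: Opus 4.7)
The plan is to reduce Proposition~\ref{as2} to a spectral problem for a first-order elliptic self-adjoint pseudodifferential operator on $S^{1}$ whose bicharacteristic flow is completely periodic, and then invoke the resulting eigenvalue-cluster asymptotics.

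First I would use the smooth version of the Riemann mapping theorem to choose a conformal diffeomorphism $\phi\colon (M,g)\to(\mathbb{D},g_{0})$ onto the unit disk with its Euclidean metric. In dimension two conformal diffeomorphisms preserve harmonic functions, and the standard transformation laws for the unit normal and the boundary line element translate the Steklov problem on $(M,g,\mu)$ into a Steklov problem on $(\mathbb{D},g_{0})$ with a new smooth strictly positive weight $\tilde\rho$ on $S^{1}=\partial\mathbb{D}$. On the standard disk the Dirichlet-to-Neumann map is $|D|=\sqrt{-\partial_{\theta}^{2}}$, so the (weighted) Steklov eigenvalues coincide with those of the self-adjoint operator
\begin{equation*}
A \;=\; \tilde\rho^{-1/2}\,|D|\,\tilde\rho^{-1/2}\qquad\text{on}\ L^{2}(S^{1}),
\end{equation*}
which is a first-order elliptic classical pseudodifferential operator with positive principal symbol $|\xi|/\tilde\rho(x)$.

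The essential point is that the Hamiltonian flow of this principal symbol on $T^{*}S^{1}\setminus 0$ splits into two components ($\xi>0$ and $\xi<0$), each periodic with the common period $L=\int_{S^{1}}\tilde\rho\,ds$. I would then appeal to Colin de Verdi\`ere's quantisation theorem for operators whose bicharacteristic flow is completely periodic: there exist constants $\alpha$ and $N_{0}$ such that, for every $n\geqslant N_{0}$, the operator $A$ has exactly two eigenvalues (counted with multiplicity) in a window of width $O(n^{-\infty})$ about $\tfrac{2\pi n}{L}+\alpha$, and no eigenvalues in between. Since consecutive clusters are separated by approximately $2\pi/L>0$ while each cluster carries only two eigenvalues, no three Steklov eigenvalues can coincide once $k$ is large enough; this gives the desired threshold $K_{g,\mu}$.

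The main obstacle is a clean justification of the cluster theorem in this setting. On a one-dimensional base the cotangent geometry is especially simple and the result can be obtained either by direct citation of Colin de Verdi\`ere's theorem, or, more self-containedly, by conjugating $A$ via a zeroth-order elliptic Fourier integral operator to a constant-coefficient model operator on $S^{1}$ modulo a smoothing remainder. The model has spectrum $\{cn+\alpha:n\in\mathbb{Z}\}$ with at most two-fold multiplicity, and a min--max argument combined with the rapid decay of the smoothing perturbation transfers this bound to the eigenvalues of $A$, completing the proof.
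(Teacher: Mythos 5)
Your proposal is correct and its overall architecture coincides with the paper's: both proofs first use the uniformisation/Riemann mapping theorem together with the conformal invariance of the Dirichlet energy to convert the problem into a weighted Steklov problem on the standard disk with a smooth positive boundary weight, and both then deduce the multiplicity bound from the fact that the resulting spectrum organises itself, for large $k$, into well-separated pairs. The difference lies in how that pairing is obtained. The paper simply cites the sharp asymptotics of Rozenbljum and Edward, namely $\sigma_{2k}=\sigma_{2k+1}+o(k^{-\infty})=2\pi k\bigl(\int_{\partial M}\rho_0\,ds\bigr)^{-1}+o(k^{-\infty})$, and reads off the conclusion. You instead propose to \emph{prove} this clustering: you realise the weighted problem as the spectrum of the self-adjoint first-order operator $\tilde\rho^{-1/2}\lvert D\rvert\,\tilde\rho^{-1/2}$ on $L^2(S^1)$, observe that its bicharacteristic flow is completely periodic with period $\int_{S^1}\tilde\rho\,ds$, and invoke the Colin de Verdi\`ere--Duistermaat--Guillemin cluster picture (or, more self-containedly, conjugation by a Fourier integral operator to a constant-coefficient model modulo smoothing). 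This is essentially how the cited asymptotics are established in the literature, so you are supplying a proof of the paper's black box rather than taking a genuinely different route; what it buys is transparency about where the factor two comes from (the two components $\pm\xi>0$ of the characteristic variety) and a argument that in principle generalises to other completely periodic settings. One point to keep in mind if you write this up carefully: the general cluster theorem gives you separated clusters of $o(1)$ width, but the statement that each cluster contains \emph{exactly} two eigenvalues for all large $n$ (not merely on average, via Weyl's law) does require the model-operator comparison you sketch at the end; note also that the full $o(k^{-\infty})$ sharpness is not actually needed for the multiplicity bound --- cluster separation plus the exact count of two per cluster already suffices.
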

Note that the inequality above is sharp; it is attained on a Euclidean disk. The proof of Proposition~\ref{as2} uses the uniformisation theorem and the sharp asymptotics for the Steklov eigenvalues of a Euclidean disk ~\cite{Ros, Ed} .

\section{Preliminaries}
\label{prems}
\subsection{Variational principle and Courant's nodal domain theorem}
We start with recalling  a variational setting for the Steklov eigenvalue problems. Given a Riemannian surface $(M,g)$ and a measure $\mu$ on its boundary, the Steklov eigenvalues can be defined by the min-max principle
$$
\sigma_k(g,\mu)=\inf_{\Lambda^{k+1}}\sup_{u\in\Lambda^{k+1}}\matheur R_g(u,\mu),
$$
where the infimum is taken over all $(k+1)$-dimensional subspaces $\Lambda^{k+1}\subset L_2(M,\mu)$ formed by $C^\infty$-smooth functions, the supremum is  over all nonzero  $u\in\Lambda^{k+1}$, and $\matheur R_g(u,\mu)$ stands for the Rayleigh quotient
$$
\matheur R_g(u,\mu)=\left(\int_M\abs{\nabla u}^2\mathit{dVol}_g\right)/\left(\int_Mu^2 d\mu\right).
$$
Here we view $\mu$ as measure on $M$ supported on the boundary $\partial M$. The Steklov eigenfunctions can be then regarded as solutions of the equation
$$
\int_M\langle\nabla u,\nabla\varphi\rangle\mathit{dVol}_g=\sigma_k(g,\mu)\int_Mu\varphi d\mu
$$
understood as an integral identity, where $\varphi$ is a $C^\infty$-smooth test-function. The equation above can be also viewed as a Schr\"odinger equation whose potential is a measure supported on the boundary of $M$.

Let $u$ be a Steklov eigenfunction. It is harmonic inside $M$, and, in particular, is $C^\infty$-smooth. By $\mathcal N(u)$ we denote its nodal set, that is the set $u^{-1}(0)$. Recall that a connected component of $M\backslash\mathcal N(u)$ is called its nodal domain. By maximum principle, it is straightforward to conclude that the closure of each nodal domain has a non-trivial intersection with the boundary $\partial M$. Further, by the strong maximum principle~\cite{GT}, any Steklov eigenfunction has different signs on adjacent nodal domains. For the sequel we need a version of Courant's nodal domain theorem for Steklov eigenfunctions. 
\begin{CNT}
Let $(M,g)$ be a compact Riemannian surface with boundary, and $\mu$ be an absolutely continuous Radon measure on $\partial M$ whose density is bounded. Then each Steklov eigenfunction $u$ corresponding to the eigenvalue $\sigma_k(g,\mu)$ has at most $(k+1)$ nodal domains.
\end{CNT}
The proof of this theorem uses the min-max principle and is similar to the one for Laplace eigenfunctions.  For the Steklov problem on planar domains it can be found in~\cite{KS}, and the argument holds  for arbitrary Riemannian surfaces with boundary.

\subsection{Local behaviour of harmonic functions; vanishing order}
Let $u$ be a harmonic function on $M$ and $x\in M$ be an interior point. The {\em vanishing order} of $u$ at $x$ is a non-negative integer, denoted by $\ord_x(u)$, that is the order of the first non-vanishing derivative of $u$ at $x$. The following statement is classical, see~\cite{Bers} and~\cite[Theorem~4.1]{Helff}, and holds for solutions of rather general second order linear elliptic equations. 
\begin{prop}
\label{BersTheorem}
Let $(M,g)$ be a compact Riemannian surface with boundary, and $u$ be a harmonic function on $M$. Then for any interior point $x_0\in M$ there exist its neighbourhood chart $U$ and a non-trivial homogeneous harmonic polynomial $P_n$ of degree $n=\ord_{x_0}(u)$ on the Euclidean plane $\mathbf R^2$ such that 
$$
u(x) = P_n(x-x_0) + O(\abs{x-x_0}^{n+1}),
$$ 
where $x\in U$.
\end{prop}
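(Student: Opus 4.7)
The plan is to reduce the problem to the case of the standard Euclidean Laplacian by passing to isothermal coordinates. Since $g$ is smooth and $\dim M = 2$, in a neighbourhood of any interior point $x_0$ there exists a chart $U$ in which $g = e^{2\phi}(dx_1^2 + dx_2^2)$ for some smooth real-valued function $\phi$. In such coordinates the Laplace--Beltrami operator satisfies $\Delta_g = e^{-2\phi}\Delta$, where $\Delta$ denotes the flat Laplacian. Therefore $u$ is $g$-harmonic on $U$ if and only if it is a classical Euclidean harmonic function of $(x_1,x_2)$, and the claim reduces to an assertion about harmonic functions on a planar disk centred at the origin.

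For the planar step I would invoke the complex-analytic description of planar harmonic functions: on a simply connected neighbourhood of $0$, any real-valued harmonic function $u$ admits a (locally defined) harmonic conjugate $v$ such that $f = u+iv$ is holomorphic. Expanding $f(z)=\sum_{k\geqslant 0}c_k z^k$ in a convergent power series and taking real parts yields
\[
u(x_1,x_2) = \sum_{k=0}^{\infty} P_k(x_1,x_2), \qquad P_k(x_1,x_2) := \operatorname{Re}\bigl(c_k(x_1+ix_2)^k\bigr),
\]
where each $P_k$ is either zero or a homogeneous harmonic polynomial of degree~$k$. The series converges absolutely and uniformly on compact subsets, so it may be differentiated term-by-term; consequently the partial derivatives of $u$ at the origin of order less than $k$ are determined by $P_0,\dots,P_{k-1}$, and those of exact order $k$ are determined by $P_k$. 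The vanishing order $n = \ord_{x_0}(u)$ thus coincides with the smallest index $k$ for which $P_k\not\equiv 0$.

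Once this identification is made, the expansion is immediate: separating off the first non-vanishing term gives
\[
u(x) = P_n(x-x_0) + \sum_{k>n} P_k(x-x_0) = P_n(x-x_0) + O\bigl(|x-x_0|^{n+1}\bigr)
\]
as $x\to x_0$ in the chart, which is the desired statement with $n=\ord_{x_0}(u)$ and $P_n$ a non-trivial homogeneous harmonic polynomial.

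The only genuinely non-elementary input is the existence of isothermal coordinates for a smooth Riemannian surface, which goes back to Korn and Lichtenstein. If one wishes to avoid it, an alternative route is to appeal directly to the general theorem of Bers quoted in the statement: it asserts that any solution of a second-order linear elliptic equation in two variables with smooth coefficients admits a leading homogeneous polynomial term at each point, and $\Delta_g u = 0$ has exactly this form in any smooth chart. The principal symbol of $\Delta_g$ at $x_0$ is (up to a positive factor) the Euclidean Laplacian in the appropriate coordinates, so the homogeneous polynomial produced is harmonic in the flat sense, as required.
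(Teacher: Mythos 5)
Your argument is correct. Note, however, that the paper does not actually prove this proposition: it is stated as a classical fact with references to Bers and to Helffer--Hoffmann-Ostenhof--Hoffmann-Ostenhof--Owen, valid for solutions of general second order linear elliptic equations, and the only proof-like content in the paper is the remark immediately following the statement that $U$ is chosen so that $\left.g\right|_U$ is conformally Euclidean, whence $g$-harmonicity is equivalent to flat harmonicity. Your first route fills in a genuine, self-contained proof for the harmonic case: isothermal coordinates reduce to the flat Laplacian (exactly the reduction the paper alludes to), and then the local representation $u=\operatorname{Re}f$ with $f$ holomorphic gives the decomposition into homogeneous harmonic polynomials $P_k=\operatorname{Re}\bigl(c_k z^k\bigr)$, with the identification of $\ord_{x_0}(u)$ as the first index with $P_k\not\equiv 0$ and the tail estimate $O\bigl(\abs{x-x_0}^{n+1}\bigr)$ following from local uniform convergence of the power series. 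This is more elementary than the cited general result, at the cost of being special to harmonic functions; the paper's citation-based route is what one needs for the generalisation to Schr\"odinger operators discussed in its Section on other boundary value problems, and your closing paragraph correctly identifies that alternative. The only external inputs you use, the Korn--Lichtenstein existence of isothermal coordinates and the existence of a local harmonic conjugate on a simply connected chart, are both standard, so there is no gap.
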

In the proposition above we assume that the neighbourhood $U$ is such that the metric $\left.g\right|_U$ is conformally Euclidean. In particular, the property of being harmonic on $U$ with respect to the metric $g$ is equivalent to being harmonic with respect to the Euclidean metric. Now for a given positive integer $\ell$ consider the set
$$
\mathcal N^\ell(u)=\{x\in M ~|~ \ord_x(u)\geqslant \ell\}.
$$ 
Using Proposition~\ref{BersTheorem}, in~\cite{Cheng} Cheng shows that around a point $x_0\in\mathcal N(u)$  the nodal set is diffeomorphic to the nodal set of the corresponding harmonic polynomial $P_n$, which consists of $n=\ord_{x_0}(u)$ lines meeting at the origin. In particular, the set $\mathcal N^2(u)$ consists of isolated points in the interior of $M$, and  the complement $\mathcal N^1(u)\backslash\mathcal N^2(u)$ is a collection of $C^\infty$-smooth arcs. Thus, the nodal set $\mathcal N(u)$ can be viewed as a graph in the interior of $M$ whose vertices are points $x\in\mathcal N^2(u)$ and edges are connected components of $\mathcal N^1(u)\backslash\mathcal N^2(u)$. In the sequel we refer to $\mathcal N(u)$ as the {\em nodal graph}, meaning this graph structure. 

It is not hard  to construct  harmonic functions on compact surfaces with boundary whose nodal graphs are infinite.  
One way to ensure the finiteness of the nodal graph of a harmonic function is to impose certain regularity on its boundary behaviour, see~\cite{Al1}. For Steklov eigenfunctions, we adopt an approach based on Courant's nodal domain theorem. The following statement  is a direct consequence of Lemma~\ref{l1} in Section~\ref{proofs}.
\begin{prop}
\label{p1}
Let $(M,g)$ be a compact Riemannian surface with boundary, and $\mu$ be an absolutely continuous Radon measure on $\partial M$ whose density is bounded. Then the nodal graph $\mathcal N(u)$ of a non-trivial Steklov eigenfunction $u$ has a finite number of vertices and edges.
\end{prop}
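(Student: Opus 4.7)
The plan is to argue by contradiction. Suppose $\mathcal N(u)$ has either infinitely many vertices or infinitely many edges; I shall construct more than $k+1$ distinct nodal domains of $u$, contradicting Courant's nodal domain theorem. Since no local model is available for $u$ at points of $\partial M$, the entire argument must rely only on the interior structure of $\mathcal N(u)$ provided by Proposition~\ref{BersTheorem}, together with compactness and topology.

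By Proposition~\ref{BersTheorem}, interior vertices of $\mathcal N(u)$ are isolated in $\mathrm{int}(M)$ and each vertex $x_0$ carries exactly $2\ord_{x_0}(u)$ incident edges, so $\mathcal N(u)$ is locally finite in $\mathrm{int}(M)$. If $\mathcal N(u)$ is infinite, compactness of $\overline M$ then forces some boundary point $p\in\partial M$ at which infinitely many vertices or edges of $\mathcal N(u)$ accumulate, and I can extract a sequence of pairwise distinct nodal arcs $\gamma_n$ whose closures meet every neighborhood of $p$.

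The topological heart of the proof takes place in a small half-disk neighborhood $U$ of $p$, with inner arc $S=\partial U\cap\mathrm{int}(M)$. After shrinking $U$ and passing to a subsequence, I may assume each $\gamma_n$ lies in $U$ with $\overline{\gamma_n}\cap S=\emptyset$, and I would then select a subfamily of connected components $R_n$ of $U\setminus\mathcal N(u)$ with $\overline{R_n}\subset U\cup\partial M$, so that $\partial R_n\subset\overline{\mathcal N(u)}\cup\partial M$. The strong maximum principle ensures that $u$ has constant nonzero sign on each $R_n$, while the boundary condition on $\partial R_n$ upgrades $R_n$ from a local region to a connected component of $M\setminus\mathcal N(u)$, i.e.\ a full nodal domain. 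The main obstacle is exactly this extraction: the arcs $\gamma_n$ may behave in uncontrolled ways near $\partial M$, and ensuring that each $R_n$ closes up into a genuine global nodal domain (rather than being a proper piece of a larger one) requires care in the choice of $U$ and of the subsequence. Once the infinite family $\{R_n\}$ is produced, it contradicts the upper bound of $k+1$ nodal domains from Courant's theorem, and therefore $\mathcal N(u)$ has only finitely many vertices and edges.
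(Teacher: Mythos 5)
Your overall strategy --- derive a contradiction with Courant's nodal domain theorem by producing too many nodal domains --- is the right one, and your observation that Proposition~\ref{BersTheorem} forces any accumulation of vertices or edges to occur at the boundary is correct. But the step you yourself flag as ``the main obstacle'' is not a technical detail to be handled ``with care'': it is the entire mathematical content of the proof, and as written it does not go through. Two things break down. First, there is no reason the distinct nodal arcs $\gamma_n$ accumulating at $p$ can be assumed to lie inside a small half-disk $U$ with $\overline{\gamma_n}\cap S=\emptyset$: an edge of $\mathcal N(u)$ whose closure meets every neighbourhood of $p$ may perfectly well leave $U$ (for instance, infinitely many interior vertices $x_n\to p$ each of whose incident edges runs off to a distant part of $M$), so the subsequence extraction you describe is not available. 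Second, and more seriously, even when local sign regions $R_n\subset U$ can be produced, adjacent ones alternate in sign but non-adjacent ones of the same sign may be connected to each other \emph{outside} $U$ --- through a handle of $M$ or around another boundary component --- and then collapse into as few as two global nodal domains, giving no contradiction with Courant. A purely local count near $p$ cannot rule this out; the amount of global merging is governed by the topology of the surface.

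This is exactly what the paper's proof of Lemma~\ref{l1} is designed to control. Instead of localizing at an accumulation point, it collapses each boundary component of $M$ to a point, obtaining the reduced nodal graph $\bar{\mathcal N}(u)$ in a closed surface $\bar M$, resolves interior vertices by a local surgery that does not increase the number of faces, and then applies the Euler inequality $v-e+f\geqslant\bar\chi$ to a suitably truncated finite subgraph. The inequality bounds the number of faces --- which equals the number of nodal domains --- from below in terms of $v$ and $e$, with the genus correction $\bar\chi$ accounting precisely for the global identifications your local argument cannot see; choosing the truncation so that $e-v+\bar\chi=k+2$ yields $f\geqslant k+2$ and the desired contradiction. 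If you want to salvage your local approach, you would need to supply an argument of the same Euler-characteristic type inside the half-disk (after first showing that only finitely many nodal strands cross the inner arc $S$), at which point you are essentially reproducing the paper's proof in local form.
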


\subsection{Graphs in surfaces: basic background}
The purpose of this subsection is to introduce notation and collect a number of auxiliary facts used throughout the rest of the paper. Let $S$ be a surface, possibly non-compact. Recall that a graph $\Gamma\subset S$ is a collection of points, called {\em vertices},  and embedded open intervals, called {\em edges}, such that the boundary of each edge belongs to the set of vertices. In addition, we assume that edges do not intersect and do not contain vertices. A graph is called {\em compact} if it is compact as a subset; it is called {\em finite}, if it has a finite number of vertices and edges. For example, for a non-trivial Steklov eigenfunction $u$ the nodal graph $\mathcal N(u)$, viewed as a subset in the interior of $M$, is not compact, since it contains edges approaching the boundary.

Let $\Gamma$ be a finite compact graph in $S$. For a vertex $x\in\Gamma$ its {\em degree} $\deg_\Gamma(x)$ is the number of edges incident to  $x$; if there is an edge that starts and ends at $x$, then it counts twice. The number of edges $e$ of a finite compact graph is given by the formula
\begin{equation}
\label{edges}
2e=\sum\deg_\Gamma(x),
\end{equation}
where the sum is taken over all vertices $x\in\Gamma$. Connected components of $S\backslash\Gamma$ are called {\em faces} of $\Gamma$. The following inequality is a consequence of the standard Euler formula for a cell complex, see~\cite[p.~207]{Gib}.
\begin{EI}
Let $\Gamma$ be a finite graph in a closed surface $S$, and $v$, $e$, and $f$ be the number of its vertices, edges, and faces respectively. Then the following inequality holds:
\begin{equation}
\label{euler}
v-e+f\geqslant \chi,
\end{equation}
where $\chi$ is the Euler-Poincar\'e number of $S$. Besides, the equality occurs if and only if $\Gamma$ is the $1$-skeleton of a cell decomposition of $S$.
\end{EI}
We end with recalling the terminology for paths in graphs, which is used at the end of Sect.~\ref{proofs}. By a path in a graph $\Gamma$ we mean a continuous map $\phi:[0,1]\to\Gamma$ such that $\phi(0)$ and $\phi(1)$ are vertices, and if the image of $\phi$ intersects non-trivially with an edge, then it contains this edge. A path in $\Gamma$ is called {\em simple}, if it has no repeated vertices and edges. A closed path in a finite graph is called the {\em simple cycle}, or {\em circuit}, if it has no repeated vertices and edges except for $\phi(0)=\phi(1)$. A {\em tree} is a connected graph that has no circuits; its every two vertices can be joined by a simple path. 

Finally, mention that a finite graph $\Gamma$ in a closed surface whose vertices have degree at least two always contains a circuit.

\section{Proof of Theorem~\ref{t1}}
\label{proofs}
\subsection{Reduced nodal graph}
Let $M$ be a smooth Riemannian surface with a non-empty boundary and $\bar M$ be a closed surface of the same genus, viewed as the image of $M$ under collapsing its boundary components to points. By $\bar{\mathcal N}(u)$ we denote the corresponding image of a nodal graph $\mathcal N(u)$; we call it the {\em reduced nodal graph}. More precisely,  its edges are the same nodal arcs, and there are two types of vertices: vertices that correspond to the boundary components that contain limit points of nodal lines, referred as {\em boundary component vertices}, and genuine vertices that correspond to the points in $\mathcal N^2(u)$, referred as {\em interior vertices}. It is straightforward to see that the number of nodal domains of an eigenfunction $u$ is precisely the number of the connected components of $\bar M\backslash\bar{\mathcal N}(u)$. Throughout the paper we use the notation $\bar\chi$ for the Euler-Poincar\'e number of $\bar M$. It coincides with the quantity $\chi+l$,  used in Theorem~\ref{t1}, and is called the {\em reduced Euler-Poincar\'e number} of $M$.

The following lemma is a basis for the proof of Theorem~\ref{t1}. It uses only Courant's nodal domain theorem, and holds for eigenfunctions of rather general boundary value problems.
\begin{lemma}
\label{l1}
Let $(M,g)$ be a compact Riemannian surface with boundary, and $\mu$ be an absolutely continuous Radon measure on $\partial M$ whose density is bounded. Then the reduced nodal graph $\bar{\mathcal N}(u)$ of a non-trivial Steklov eigenfunction $u$ is finite, i.e. it has a finite number of vertices and edges.
\end{lemma}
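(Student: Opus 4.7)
\noindent
My plan is to prove finiteness of $\bar{\mathcal N}(u)$ by combining Courant's nodal domain theorem with the Euler inequality applied to carefully chosen finite subgraphs. By Proposition~\ref{BersTheorem} every interior vertex $x$ of $\bar{\mathcal N}(u)$ has finite degree equal to $2\ord_{x}(u)\geqslant 4$, and there are at most $l$ boundary component vertices. Hence the lemma will follow once I establish (a) that the number of interior vertices is finite, and (b) that the number of edges with both endpoints being boundary component vertices is finite; finite degrees then control all remaining edges.

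The key observation is a \emph{master estimate}: for every finite subgraph $\Gamma\subset\bar{\mathcal N}(u)$ in $\bar M$ one has
$$
E(\Gamma)-V(\Gamma)\leqslant k+1-\bar\chi.
$$
Indeed, since $\bar{\mathcal N}(u)$ is a $1$-complex with empty interior in $\bar M$, every connected component of $\bar M\setminus\Gamma$ contains at least one connected component of $\bar M\setminus\bar{\mathcal N}(u)$, so $F(\Gamma)\leqslant F(\bar{\mathcal N}(u))\leqslant k+1$ by Courant's theorem; the Euler inequality then yields the claim.

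Statement (b) is then immediate: a subgraph consisting of $E$ edges between boundary component vertices has $V\leqslant l$, and the master estimate gives $E\leqslant l+k+1-\bar\chi=k+1-\chi$. For (a), given any $N$ interior vertices $v_{1},\ldots,v_{N}$, I would form the subgraph $\Gamma$ spanned by $\{v_{1},\ldots,v_{N}\}$ together with all boundary component vertices. Its number of edges is at most $N+k+1-\chi$ by the master estimate; combined with $\sum_{i}\deg_{\bar{\mathcal N}(u)}(v_{i})\geqslant 4N$ and $\sum_{i}\deg_{\Gamma}(v_{i})\leqslant 2E(\Gamma)$, this forces at least $2N-2(k+1-\chi)$ edges to leave $\{v_{1},\ldots,v_{N}\}$ and land on external interior vertices. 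Adjoining those endpoints and iterating produces a strictly increasing chain; if this chain stabilises at a closed finite set $S^{*}$, the same argument applied with $\deg_{\Gamma(S^{*})}=\deg_{\bar{\mathcal N}(u)}$ on $S^{*}$ forces $|S^{*}|\leqslant k+1-\chi$.

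The hard part will be to exclude the non-terminating case of the iteration, which would correspond to an infinite interior vertex set accumulating---by compactness of $\bar M$ together with the discreteness of $\mathcal N^{2}(u)$ inside $M$---at some boundary component vertex. I would dispose of this scenario by a localisation argument near the accumulation point, exploiting the master estimate together with the maximum-principle fact that every nodal domain of $u$ touches $\partial M$ to manufacture a finite subgraph whose face count would have to exceed $k+1$. Once (a) and (b) are both established, finite degrees imply that there are finitely many edges in total, completing the proof.
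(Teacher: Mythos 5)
Your \emph{master estimate} $E(\Gamma)-V(\Gamma)\leqslant k+1-\bar\chi$ for finite subgraphs of $\bar{\mathcal N}(u)$ is correct and is in substance exactly the combination of Courant's theorem and the Euler inequality that the paper uses (each face of a subgraph contains a nodal domain because the nodal set has empty interior, so $F(\Gamma)\leqslant k+1$). Step (b) follows from it cleanly. The problem is step (a), ruling out infinitely many interior vertices, and there your argument has a genuine gap. The induced subgraph on $\{v_1,\ldots,v_N\}$ only sees the edges staying inside that set, so the degree bound $\deg\geqslant 4$ cannot be fed into the master estimate directly; your fix is to adjoin the external endpoints and iterate, but this iteration fails to terminate precisely in the situation you are trying to exclude (infinitely many interior vertices accumulating at the boundary), and in that case it produces no contradiction at all --- it just exhausts an infinite set. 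The concluding ``localisation argument near the accumulation point'' is announced but not supplied, and it is not clear how to make it work: near an accumulation point on $\partial M$ one has no local model for the nodal set, which is exactly the difficulty the paper highlights.

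The missing idea is the paper's \emph{resolution of vertices}: inside a small disk around an unwanted interior vertex $x$ of degree $2n$, delete the nodal set and splice the $2n$ boundary intersection points pairwise by disjoint arcs (discarding any resulting closed loops without vertices). This deletes the vertex, does not increase the number of faces, and --- crucially --- leaves the degrees of all \emph{retained} vertices untouched. Resolving all interior vertices except a chosen finite collection of $v+k+2-\bar\chi$ of them (where $v$ is the number of boundary component vertices) produces a genuine finite graph in $\bar M$ in which each retained interior vertex still has degree at least $4$; then $2e'\geqslant 4(v+k+2-\bar\chi)$ and $v'\leqslant 2v+k+2-\bar\chi$ give $f'\geqslant e'-v'+\bar\chi\geqslant k+2$, contradicting Courant. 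Your framework (Courant plus Euler on finite subgraphs) is the right one, but without a device such as this resolution procedure --- which converts a neighbourhood of the infinite part of the graph into something invisible to the Euler count while preserving the degree information you need --- step (a) does not close.
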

\begin{proof}
Consider the reduced nodal graph $\bar{\mathcal N}(u)$ corresponding to a non-trivial Steklov eigenfunction $u$. For a proof of the lemma it is sufficient  to rule out the occurrence of:
\begin{itemize}
\item[(i)] boundary component vertices of infinite degree and 
\item[(ii)] the infinite number of interior vertices 
\end{itemize}
in $\bar{\mathcal N}(u)$. We are going to construct new graphs in $\bar M$ by resolving interior vertices of $\bar{\mathcal N}(u)$ in the following fashion. Let $x\in\mathcal N^2(u)$ be an interior vertex; its degree equals $2n$, where $n=\ord_x(u)$. Let $U$ be 
a small disk centered at $x$ that does not contain other vertices and such that nodal arcs incident to $x$ intersect $\partial U$ at $2n$ points precisely; the existence of such a disk follows from  Proposition \ref{BersTheorem}. We denote these intersection points by $y_i$, where $i=0,\ldots, 2n-1$, and assume that they are ordered consequently in the clockwise fashion. A new graph is obtained from $\bar{\mathcal N}(u)$ by changing it inside $U$ and removing possibly appeared edges without vertices. More precisely, we remove the nodal set inside $U$ and round-off the edges on the boundary $\partial U$ by non-intersecting arcs in $U$ joining the points $y_{2j}$ and $y_{2j+1}$. If there was an edge that starts and ends at $x$, then such a procedure may make it into a loop. If this occurs, then we remove this loop to obtain a genuine graph in $\bar M$. It has one vertex less and at most as many faces as the original graph.

\noindent
{\em Ruling out~(i).} Let us resolve each interior vertex in $\bar{\mathcal N}(u)$ in the way described above. The result is a graph $\Gamma$ in $\bar M$ whose only vertices are boundary component vertices in $\bar{\mathcal N}(u)$; we denote by $v$ their number. Besides, it has at most as many faces as the reduced nodal graph, that is by Courant's nodal domain theorem at most $k+1$. Suppose that the reduced nodal graph has a boundary component vertex of infinite degree; then so does $\Gamma$. Let us remove all edges in $\Gamma$ except for $v+k+2-\bar\chi$ of them to obtain a new finite graph, and denote by $f$ the number of its faces. Since removing an edge does not increase the number of faces, we have $f\leqslant k+1$. On the other hand, by the Euler inequality~\eqref{euler}, we have
$$
f\geqslant e-v+\bar\chi=k+2.
$$
Thus, we arrive at a contradiction.

\noindent
{\em Ruling out~(ii).} Suppose the contrary; the situation described in~(ii) occurs. Let $v$ be the number of boundary component vertices in $\bar{\mathcal N}(u)$. Let us resolve all interior vertices except for $v+k+2-\bar\chi$ of them. The result is a finite graph $\Gamma'$. Denote by $v'$, $e'$, and $f'$ the number of its vertices, edges, and faces respectively; then we have
$$
v'\leqslant 2v+k+2-\bar\chi\quad\text{and}\quad e'\geqslant 2(v+k+2-\bar\chi).
$$
Here in the second inequality we used formula~\eqref{edges} and  the fact that the degree of each vertex $x\in\mathcal N^2(u)$ is at least 4. Combining these two inequalities with the Euler inequality~\eqref{euler}, we obtain
$$
f'\geqslant e'-v'+\bar\chi\geqslant k+2.
$$
Thus, we arrive at a contradiction with Courant's nodal domain theorem.
\end{proof}

\subsection{Multiplicity bounds: the first inequality}
We start with a lemma that gives a lower bound for the number of nodal domains via the vanishing order of points $x\in\mathcal N^2(u)$. For the Dirichlet boundary problem on surfaces of zero genus it is proved in~\cite{HoNa}. We give a rather simple proof based on the use of the Euler inequality. 
\begin{lemma}
\label{l3}
Let $(M,g)$ be a compact Riemannian surface with boundary, and $\mu$ be an absolutely continuous Radon measure on $\partial M$ whose density is bounded. Then for any non-trivial Steklov eigenfunction $u$ the number of its nodal domains is at least $\sum(\ord_x(u)-1)+\bar\chi$, where the sum is taken over all points in $\mathcal N^2(u)$ and $\bar\chi$ is the reduced Euler-Poincar\'e number of $M$.
\end{lemma}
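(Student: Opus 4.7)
The plan is to apply the Euler inequality to a slightly modified reduced nodal graph in $\bar M$. Starting from $\bar{\mathcal N}(u)$, which is finite by Lemma~\ref{l1}, I build a proper graph $\Gamma \subset \bar M$ by dropping the boundary component vertices that are not incident to any nodal arc and by adding a single virtual vertex on each nodal component that would otherwise be an edge of $\bar{\mathcal N}(u)$ without endpoints. The first modification removes only isolated vertices that would create slack in the Euler estimate, and the second changes the vertex and edge counts by the same amount; in particular the faces of $\Gamma$ in $\bar M$ are in bijection with the nodal domains of $u$.

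The crux of the proof, which I expect to be the main obstacle, is the following parity claim: at every retained boundary component vertex $c$ the degree $d_c$ of $\Gamma$ is even, and hence $d_c \geq 2$. This will follow from the strong maximum principle recalled in Section~\ref{prems}: a sufficiently small disk neighbourhood of $c$ in $\bar M$ is divided by the $d_c$ incident edges into $d_c$ cyclically ordered sectors, each of which lies in a single nodal domain of $u$; since $u$ reverses sign across every nodal arc, the signs of $u$ on consecutive sectors must alternate, and for such an alternation to close up around the cycle $d_c$ has to be even.

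Granted the parity claim, the rest is a routine Euler count. Writing $n_i = |\mathcal N^2(u)|$, $v_b^*$ for the number of retained boundary component vertices and $c_0$ for the number of virtual vertices, the degree-sum formula yields $v = n_i + v_b^* + c_0$ and $2e = 2\sum_{x \in \mathcal N^2(u)} \ord_x(u) + \sum_c d_c + 2c_0$, so that
\[
e - v = \sum_{x \in \mathcal N^2(u)}(\ord_x(u) - 1) + \tfrac{1}{2}\sum_c d_c - v_b^* \geq \sum_{x \in \mathcal N^2(u)}(\ord_x(u) - 1),
\]
where the last inequality uses the parity claim. Since $\bar M$ is closed and the faces of $\Gamma$ correspond to the nodal domains of $u$, the Euler inequality $f \geq e - v + \bar\chi$ then gives the required lower bound.
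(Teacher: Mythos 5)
Your argument is essentially the paper's own proof: both apply the degree-sum formula to the reduced nodal graph in $\bar M$, using that each interior vertex $x$ has degree $2\ord_x(u)$ while each boundary component vertex has degree at least two, and then conclude with the Euler inequality $f\geqslant e-v+\bar\chi$ together with the identification of faces with nodal domains. The only differences are cosmetic: you justify the degree bound at boundary component vertices by a sign-alternation (parity) argument, which the paper merely asserts, and you insert virtual vertices to handle closed nodal circles, whereas the paper works directly with $\bar{\mathcal N}(u)$.
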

\begin{proof}
Let $\bar{\mathcal N}(u)$ be a reduced nodal graph in $\bar M$, and $v$, $e$, and $f$ be the number of its vertices, edges, and faces respectively; by $r$ we denote the number of boundary component vertices. Using formula~\eqref{edges}, we get
$$
e\geqslant r+\sum\ord_x(u),
$$
where the sum is taken over $x\in\mathcal N^2(u)$. Here we used the fact that the degree of each boundary component vertex is at least two. Viewing $v$ as the sum $r+\sum 1$, where the sum symbol is again over $x\in\mathcal N^2(u)$, by the Euler inequality we obtain
$$
f\geqslant e-v+\bar \chi\geqslant\sum(\ord_x(u)-1)+\bar\chi.
$$
Since $f$ is precisely the number of nodal domains, we are done.
\end{proof}
The following lemma is a version of the statement due to~\cite{Na}.
\begin{lemma}
\label{l4}
Let $(M,g)$ be a compact Riemannian surface with boundary, and $u_1,\ldots,u_{2n}$ be a collection of non-trivial linearly independent harmonic functions on $M$. Then for a given interior point $x\in M$ there exists a non-trivial linear combination $\sum\alpha_iu_i$ whose vanishing order at the point $x$ is at least $n$.
\end{lemma}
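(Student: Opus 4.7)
The plan is the standard dimension-counting argument, going back to Cheng, based on the Taylor expansion of harmonic functions in two dimensions. First, I would choose an isothermal coordinate chart $U$ centered at $x$ so that the metric $g$ is conformally Euclidean on $U$; in these coordinates harmonicity with respect to $g$ coincides with Euclidean harmonicity, and each $u_i$ is real-analytic (locally the real part of a holomorphic function of the complex coordinate $z$).

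Next I would exploit the Taylor expansion at $x$. By Proposition \ref{BersTheorem} (applied iteratively, or equivalently from the holomorphic expansion $\sum_k c_k z^k$), each $u_i$ admits a representation
\[
u_i(y) = \sum_{k=0}^{n-1} P_{i,k}(y-x) + O(|y-x|^n),
\]
where each $P_{i,k}$ is a homogeneous harmonic polynomial of degree $k$ on $\mathbf{R}^2$. The key elementary fact is that the space $H_k$ of homogeneous harmonic polynomials of degree $k$ on $\mathbf{R}^2$ has dimension $1$ when $k=0$ and dimension $2$ when $k\geqslant 1$ (spanned by $\operatorname{Re} z^k$ and $\operatorname{Im} z^k$).

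The heart of the argument is then a dimension count. Consider the linear map
\[
T\colon \mathbf{R}^{2n} \longrightarrow \bigoplus_{k=0}^{n-1} H_k, \qquad T(\alpha_1,\ldots,\alpha_{2n}) = \Bigl(\sum_{i=1}^{2n}\alpha_i P_{i,k}\Bigr)_{k=0}^{n-1}.
\]
The dimension of the target equals $1 + 2(n-1) = 2n-1$, which is strictly less than $2n$. Hence $T$ has a non-trivial kernel, and any non-zero $(\alpha_i)$ in the kernel produces a linear combination $u = \sum_i\alpha_i u_i$ whose Taylor expansion at $x$ vanishes to order at least $n-1$, so $\ord_x(u)\geqslant n$. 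Non-triviality of $u$ on $M$ is immediate from the assumed linear independence of $u_1,\ldots,u_{2n}$.

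There is essentially no serious obstacle here beyond the setup: the argument is pure linear algebra once the isothermal chart and the harmonic-polynomial decomposition are in place. The only subtle point worth emphasising is the exceptional rôle of the degree-zero term, where constants contribute only one dimension instead of two; this is precisely what makes the hypothesis of $2n$ functions (rather than $2n-1$) both necessary and sufficient for the stated vanishing conclusion.
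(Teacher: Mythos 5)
Your proof is correct and is essentially the paper's argument: both rest on the same dimension count $1+2(n-1)=2n-1<2n$, using that homogeneous harmonic polynomials of degree $k\geqslant 1$ on $\mathbf{R}^2$ form a two-dimensional space (the paper phrases this via the filtration $V_i=\{u:\ord_x(u)\geqslant i\}$ and the quotients $V_i/V_{i+1}$, while you phrase it via the Taylor-coefficient map $T$, which is the same count). The only cosmetic slip is the phrase ``vanishes to order at least $n-1$''; what you mean, and what your argument gives, is that all homogeneous terms of degree $\leqslant n-1$ vanish, so $\ord_x(u)\geqslant n$.
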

\begin{proof}
Let $V$ be the span of $u_1,\ldots, u_{2n}$, and $V_i$ be its subspace formed by harmonic functions $u\in V$ whose vanishing order at $x$ is at least $i$, $\ord_x(u)\geqslant i$. Clearly, the subspaces $V_i$ form a nested sequence, $V_{i+1}\subset V_i$. The statement of the lemma says that $V_n$ is non-trivial. Suppose the contrary, that is $V_n=\{0\}$. Then the dimension of $V$ satisfies the relation
$$
\dim V\leqslant 1+\sum_{i=1}^{n-1}\dim (V_i/V_{i+1}).
$$
By Proposition~\ref{BersTheorem} the factor-space $V_i/V_{i+1}$ can be identified with a subspace of homogeneous harmonic polynomials of order $i$. In polar coordinates on $\mathbf R^2$ such polynomials have the form
$$
P_i(r\cos\theta,r\sin\theta)=ar^i\cos(i\theta)+br^i\sin(i\theta);
$$
in particular, they form a space of dimension two. Thus, we obtain
$$
\dim V\leqslant 1+2(n-1)=2n-1.
$$
This is a contradiction with the hypotheses of the lemma.
\end{proof}

Now we prove the first inequality in Theorem~\ref{t1}:
$$
m_k(g,\mu)\leqslant 2(2-\bar\chi)+2k+1. 
$$
Suppose the contrary to its statement. Then there exists at least $2(2-\bar\chi)+2k+2$ linearly independent eigenfunctions corresponding to the eigenvalue $\sigma_k(g,\mu)$. Pick an interior point $x\in M$. By Lemma~\ref{l4} there exists a new eigenfunction $u$ whose vanishing order at the point $x$ is at least $2-\bar\chi+k+1$. Combining this with Lemma~\ref{l3}, we conclude that the number of the nodal domains of $u$ is at least $k+2$. Thus, we arrive at a contradiction with Courant's nodal domain theorem. \qed

\subsection{Multiplicity bounds: the second inequality}
\label{2nd}
The proof of the second inequality is based on the following lower bound for the number of nodal domains.
\begin{lemma}
\label{l2}
Let $(M,g)$ be a compact Riemannian surface with boundary, and $\mu$ be an absolutely continuous Radon measure on $\partial M$ whose density is bounded. Then for any non-trivial Steklov eigenfunction $u$ the number of its nodal domains is at least 
$$
\max\{2\ord_x(u)+2\bar\chi-2l-2~|~ x\in\mathcal N^2(u)\},
$$
where $\bar\chi$ is the reduced Euler-Poincar\'e number of $M$ and $l$ is the number of boundary components.
\end{lemma}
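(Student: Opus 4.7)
The plan is to refine the argument behind Lemma~\ref{l3} by exploiting the maximum principle as a \emph{quantitative} constraint (not only as the qualitative fact that every nodal domain touches $\partial M$), and to play this constraint against the Euler inequality in a two-sided fashion.

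Fix $x_0\in\mathcal{N}^2(u)$ with $n=\ord_{x_0}(u)$. First, I resolve every interior vertex of $\bar{\mathcal{N}}(u)$ other than $x_0$, in the sense used in the proof of Lemma~\ref{l1}. This produces a finite graph $\Gamma\subset\bar M$ whose only interior vertex is $x_0$, still with $\deg_\Gamma(x_0)=2n$; the remaining vertices are the boundary component vertices $B_1,\dots,B_r$ with $r\leqslant l$, each of even degree $d_i=\deg_\Gamma(B_i)\geqslant 2$ (by the sign-alternation argument used in Lemma~\ref{l3}). Since each resolution can only merge faces, $f\geqslant f(\Gamma)$.

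The key observation is that every face $F$ of $\Gamma$ has some $B_i$ on its boundary. Indeed, each face of $\Gamma$ is a union of nodal domains of $u$, and by the maximum principle each nodal domain has closure meeting $\partial M$, hence contains some $B_i$ on its boundary in $\bar M$. Counting incidences $(F,B_i)$ with $B_i\in\partial F$, and noting that at any vertex $B_i$ the number of distinct faces of $\Gamma$ visible at $B_i$ is at most $d_i$, I get
\[
f(\Gamma)\;\leqslant\;\sum_i d_i\;=\;2e(\Gamma)-2n,
\]
the last equality being the handshake lemma together with $\deg_\Gamma(x_0)=2n$. On the other side, the Euler inequality applied to $\Gamma\subset\bar M$ reads
\[
f(\Gamma)\;\geqslant\;e(\Gamma)-v(\Gamma)+\bar\chi\;=\;e(\Gamma)-1-r+\bar\chi.
\]
Sandwiching these two bounds forces $e(\Gamma)\geqslant 2n-1-r+\bar\chi$, and re-substituting into the Euler lower bound yields
\[
f\;\geqslant\;f(\Gamma)\;\geqslant\;2n+2\bar\chi-2r-2\;\geqslant\;2n+2\bar\chi-2l-2,
\]
which is the claimed inequality.

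The step I expect to be most delicate is the counting inequality $f(\Gamma)\leqslant\sum_i d_i$: one must verify that the maximum-principle property (``every nodal domain touches $\partial M$'') survives both the resolution of interior vertices and the removal of any free loops this resolution creates, so that every face of the cleaned-up graph $\Gamma$ still carries some $B_i$ on its boundary. Once this is in hand, the rest is an exercise in playing Euler's upper and lower bounds against each other, together with the equality $\deg_\Gamma(x_0)=2n$.
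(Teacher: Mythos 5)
Your argument is a genuinely different route from the paper's: the paper splits the connected component of $x_0$ into a circuit part $\Gamma_1$ and a tree $\Gamma_2$ of paths to the boundary, and the heart of the proof is the bound $\deg_{\Gamma_1}(x_0)\leqslant 2l+2-2\bar\chi$ (Lemma~\ref{l22}), obtained from the fact that the faces of $\Gamma_1$ contain pairwise distinct boundary components, whence $f_1\leqslant l$. You instead resolve all other interior vertices and play a face--vertex incidence count against the Euler inequality. Unfortunately, your key counting inequality $f(\Gamma)\leqslant\sum_i d_i$ rests on the claim that every face of $\Gamma$ has some boundary component vertex $B_i$ \emph{on its boundary}, and this does not follow from the maximum principle. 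What the maximum principle gives is that the closure of each face of $\Gamma$ in $\bar M$ contains some collapsed point $p_j$; but $p_j$ is a vertex of $\bar{\mathcal N}(u)$ (hence of $\Gamma$) only if some nodal line accumulates on the $j$-th boundary component. If no nodal line approaches that component, $p_j$ sits in the \emph{interior} of the face and contributes nothing to your incidence count. Nothing in your argument rules out, say, the following configuration on an annulus ($\bar M=S^2$, $\bar\chi=2$, $l=2$): $x_0$ with $n=2$, one nodal loop at $x_0$ enclosing the first boundary circle (approached by no nodal arc) and two arcs from $x_0$ to the second boundary circle. Then $v(\Gamma)=2$, $e(\Gamma)=3$, $f(\Gamma)=3$, while $\sum_i d_i=\deg_\Gamma(B_2)=2$, so your inequality fails. (The step you flagged as delicate --- whether the maximum-principle property survives the resolutions --- is actually fine; the problem is the identification of ``touches $\partial M$'' with ``is incident to a $B_i$''.)

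The gap is repairable. A face with no $B_i$ on its boundary must contain a non-vertex collapsed point in its interior; distinct faces contain distinct such points, and there are only $l-r$ of them, so $f(\Gamma)\leqslant\sum_i d_i+(l-r)=2e(\Gamma)-2n+l-r$. Feeding this into your sandwich with $f(\Gamma)\geqslant e(\Gamma)-1-r+\bar\chi$ gives $e(\Gamma)\geqslant 2n-1+\bar\chi-l$, and then $f\geqslant f(\Gamma)\geqslant 2n+2\bar\chi-l-r-2\geqslant 2n+2\bar\chi-2l-2$, so the stated bound survives (you spend the inequality $r\leqslant l$ once instead of twice). With this correction your proof is a valid alternative to the paper's: it replaces the structural analysis of $\Gamma_1$ by a global double count, at the price of having to account explicitly for boundary components that the nodal set never reaches.
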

We proceed with the proof of the second inequality in Theorem~\ref{t1}:
$$
m_k(g,\mu)\leqslant 2(2-\bar\chi)+2l+k.
$$
Suppose the contrary. Then there exists at least $2(2-\bar\chi)+2l+k+1$ linearly independent eigenfunctions corresponding to the eigenvalue $\sigma_k(g,\mu)$. Pick an interior point $x\in M$. By Lemma~\ref{l4} there exists a new eigenfunction $u$ whose vanishing order at the point $x$ is at least 
$2-\bar\chi+l+[(k+1)/2]$, where $[\cdot]$ denotes the integer part. Using the estimate in Lemma~\ref{l2}, we see that the number of the nodal domains of $u$ is at least $k+2$. Thus, we arrive at a contradiction with Courant's nodal domain theorem. The same argument shows that this multiplicity bound is strict for an even $k$. \qed

The rest of the section is concerned with the proof of Lemma~\ref{l2}. It is based on the study of certain subgraphs in the nodal graph, which we introduce now. 

For a given vertex $x\in\mathcal N^2(u)$ we denote by $\Gamma_1$ a subgraph of $\mathcal N(u)$  that is the union of all circuits in the connected component of $x$ and all simple paths joining $x$ and the vertices of these circuits. Further, let $\Gamma_2$ be a subgraph of $\mathcal N(u)$ formed by {all simple paths in the nodal set starting from $x$ and approaching the boundary $\partial M$ that do not intersect $\Gamma_1$ except for $x$}. Clearly, the subgraph $\Gamma_2$ does not contain any circuits, and hence, it is a tree. Besides, any nodal edge incident to $x$ belongs either to $\Gamma_1$ or $\Gamma_2$, that is
\begin{equation}
\label{a2}
2\ord_x(u)=\deg_{\Gamma_1}(x)+\deg_{\Gamma_2}(x).
\end{equation}
Finally, we denote by $\Gamma$ the union of $\Gamma_1$ and $\Gamma_2$.

We proceed with the following lemma, which is specific to the Steklov eigenvalue problem.
\begin{lemma}
\label{l22}
Let $u$ be a non-trivial Steklov eigenfunction and  $x\in\mathcal N^2(u)$ be a vertex in its nodal graph. Then the degree of $x$ in $\Gamma_1$ is at most $2l+2-2\bar\chi$. 
\end{lemma}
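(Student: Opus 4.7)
The plan is to exploit the Euler inequality in $\bar M$ applied to $\Gamma_1$, combined with a bound on the number of its faces and a lower bound on the degrees of its other vertices. By Lemma~\ref{l1}, the nodal graph $\mathcal N(u)$ is finite, hence so is its subgraph $\Gamma_1$; regarding it as a graph in the closed surface $\bar M$, let $v$, $e$, and $f$ denote the numbers of its vertices, edges, and faces.

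The main point is to show that $f\leqslant l$, and this is where the Steklov boundary condition enters the argument. I would establish that every face of $\Gamma_1$ in $\bar M$ contains at least one boundary component vertex of $\bar M$. Indeed, if a face $F$ contained none, then $F$ would lie entirely in the interior of $M$; since $M\setminus\mathcal N(u)$ is open and dense, some nodal domain $D$ would meet $F$. But $D$ cannot cross $\mathcal N(u)\supset\Gamma_1\supset\partial F$, so $D\subset F$, and thus $D$ does not touch $\partial M$, contradicting the standard maximum-principle fact recalled in Section~\ref{prems} that every Steklov nodal domain meets the boundary. Since each of the $l$ boundary component vertices lies in exactly one face, one obtains $f\leqslant l$.

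The remaining ingredient is a degree estimate: every vertex $y\neq x$ of $\Gamma_1$ satisfies $\deg_{\Gamma_1}(y)\geqslant 2$. This follows directly from the construction of $\Gamma_1$: either $y$ lies on a circuit contained in $\Gamma_1$ (contributing two edges at $y$), or $y$ is an internal vertex of a simple path in $\Gamma_1$ joining $x$ to a circuit (again contributing two edges). The handshake formula then gives $2e\geqslant\deg_{\Gamma_1}(x)+2(v-1)$, while the Euler inequality $v-e+f\geqslant\bar\chi$ together with $f\leqslant l$ gives $e\leqslant v+l-\bar\chi$. Putting the two inequalities together yields
\[
\deg_{\Gamma_1}(x)\leqslant 2(e-v)+2\leqslant 2l+2-2\bar\chi,
\]
as required.

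The principal obstacle is the face-count step; it is the only place where the specifics of the Steklov problem enter (namely, that every nodal domain meets the boundary), while the rest is essentially the same kind of combinatorial surface argument used earlier in the paper. One pleasant byproduct is the observation that $2l+2-2\bar\chi$ equals twice the first Betti number of $M$, which gives the result the natural interpretation that the degree of $x$ in the cyclic part of the nodal graph is at most twice this Betti number.
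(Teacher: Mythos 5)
Your proof is correct and follows essentially the same route as the paper's: the degree-two bound for vertices of $\Gamma_1$ other than $x$, the handshake formula, the face count $f\leqslant l$ via the fact that every nodal domain meets $\partial M$, and the Euler inequality for $\Gamma_1$ viewed in $\bar M$. The only difference is that you spell out the face-count step in slightly more detail than the paper does; the substance is identical.
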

\begin{proof}
Let $v_1$, $e_1$, and $f_1$ be the number of vertices, edges, and faces of $\Gamma_1$ respectively. Since every vertex in $\Gamma_1$, different from $x$, belongs either to a circuit or the interior of a simple path, its degree in $\Gamma_1$ is at least $2$. Thus, by formula~\eqref{edges} we have
\begin{equation}
\label{a1}
2e_1\geqslant\deg_{\Gamma_1}(x)+2(v_1-1).
\end{equation}
Recall that every nodal domain of $u$ has a non-trivial arc on the boundary. Each face of $\Gamma_1$ contains the union of nodal domains, and therefore it contains at least one boundary component of $M$. Since any two faces of $\Gamma_1$ can not contain the same boundary component, we have $f_1\leqslant l$. Viewing $\Gamma_1$ as a subgraph in the reduced nodal graph $\bar{\mathcal N}(u)$, we can apply the Euler inequality  to obtain
$$
e_1\leqslant v_1+f_1-\bar\chi\leqslant v_1+l-\bar\chi.
$$
Now the statement follows by the combination of this inequality with relation~\eqref{a1}.
\end{proof}

\begin{proof}[Proof of Lemma~\ref{l2}]
Let $x\in\mathcal N^2(u)$ be a vertex in the nodal graph. Consider a subgraph $\Gamma_2$ of the nodal graph, and let $v_2$ and $e_2$ be the number of its vertices and edges respectively. We claim that the number of edges in $\Gamma_2$ that are not incident to $x$ is greater or equal than the number of vertices different from $x$:
\begin{equation}
\label{a3}
e_2-\deg_{\Gamma_2}(x)\geqslant v_2-1.
\end{equation}
Indeed, this follows from the fact that $\Gamma_2$ is a tree, and that edges approaching the boundary have only one vertex.

Now consider the subgraph $\Gamma$, defined as the union of $\Gamma_1$ and $\Gamma_2$. We use the notation $v$, $e$, and $f$ for the number of its vertices, edges, and faces respectively. Clearly, we have
$$
e=e_1+e_2,\qquad v=v_1+v_2-1.
$$
Combining these identities with relations~\eqref{a2}---\eqref{a3}, we obtain
$$
e-v\geqslant 2\ord_x(u)-\frac{1}{2}\deg_{\Gamma_1}(x)-1.
$$
Using the bound for the degree from Lemma~\ref{l22}, we arrive at the relation
$$
e-v\geqslant 2\ord_x(u)+\bar\chi-l-2.
$$
Finally, viewing $\Gamma$ as a subgraph in the reduced nodal graph $\bar{\mathcal N}(u)$,  we combine the last relation with the Euler inequality to obtain 
$$
f\geqslant e-v+\bar\chi-l\geqslant 2\ord_x(u)+2\bar\chi-2l-2.
$$
Since the number of faces $f$ is not greater than the number of nodal domains, we are done.
\end{proof}

\section{Proof of Theorem~\ref{t2}}
\label{proofs2}
\subsection{Structure of nodal graphs}
The proof of the theorem is based on the analysis of the equality case in~\eqref{in2}. Throughout this section we assume that $k$ is odd. Suppose the contrary to the statement. Then there exists a metric $g$ and a measure $\mu$ on the surface $M$ such that for some $k\geqslant 1$
$$
m_k(g,\mu)=2(2-\bar\chi)+2l+k=2n+1,
$$
where $n=2-\bar\chi+l+(k-1)/2$. Fix a point $x\in M$. Then by Lemma~\ref{l4}, one can find two linearly independent eigenfunctions $u_0$ and $u_1$ for $\sigma_k(g,\mu)$ such that $\ord_x(u_i)\geqslant n$, $i=0,1$. The combination of Lemma~\ref{l2} and Courant's nodal domain theorem yields
$$
2n+2\bar\chi-2l-2\leqslant k+1.
$$
Using the formula for $n$, we conclude that the inequality above becomes an equality and, in particular, $\ord_x(u_i)=n$, where $i=0,1$. Since $M$ is not homeomorphic to a disk, we also have $n\geqslant 2$.

The following lemma says that the nodal graphs of the eigenfunctions $u_i$ have a rather rigid structure. Below by the {\em nodal loop} we mean a nodal arc in the interior of $M$ that starts and ends at the same vertex. 
\begin{lemma}
\label{aux}
Let $(M,g)$ be a compact Riemannian surface with boundary, and $\mu$ be an absolutely continuous Radon measure on $\partial M$ whose density is bounded. Further, let $u$ be a non-trivial Steklov eigenfunction for the eigenvalue $\sigma_k(g,\mu)$ such that
$$
2\ord_x(u)+2\bar\chi-2l-2=k+1
$$
for some $x\in\mathcal N^2(u)$. Then the nodal graph $\mathcal N(u)$ does not contain any vertices apart from $x$ and has precisely $l+1-\bar\chi$ loops. Besides, there are no nodal arcs with both ends on the boundary.
\end{lemma}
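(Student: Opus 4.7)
The plan is to exploit the equality hypothesis to force each inequality in the proof of Lemma~\ref{l2} to be sharp, and then read off the conclusions from the resulting rigidity. The chain of bounds in Lemma~\ref{l2} is: (a)~the tree-type estimate $e_2 - \deg_{\Gamma_2}(x) \geqslant v_2 - 1$ for $\Gamma_2$; (b)~the bound from Lemma~\ref{l22} on $\deg_{\Gamma_1}(x)$, which itself packages the Euler inequality for $\Gamma_1$ together with the handshake estimate $2e_1 \geqslant \deg_{\Gamma_1}(x) + 2(v_1-1)$ and the inequality $f_1 \leqslant l$; (c)~the Euler inequality for $\Gamma$ inside $\bar M$, where at most $l$ boundary-component vertices contribute; and (d)~the fact that the number of faces of $\Gamma$ is no greater than the number of nodal domains of $u$, which itself is at most $k+1$ by Courant's theorem. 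Under the hypothesis, every one of these becomes an equality.

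First I would read off what each equality says. In~(d), each face of $\Gamma$ is a single nodal domain, which forces $\mathcal N(u) = \Gamma$: no nodal arc can lie strictly inside a face of $\Gamma$, else the strong maximum principle would subdivide the face into nodal domains of opposite sign. In~(c), the augmented graph formed by $\Gamma$ together with $l$ boundary-component vertices is the $1$-skeleton of a cell decomposition of $\bar M$, and in particular $\Gamma_2$ reaches every boundary component. In~(a), $\Gamma_2$ is a disjoint union away from $x$ of $\deg_{\Gamma_2}(x)$ simple paths from $x$ to $\partial M$, whose interior vertices (if any) all have degree~$2$ in $\Gamma_2$. In~(b), every non-$x$ vertex of $\Gamma_1$ has degree exactly~$2$ in $\Gamma_1$, the graph $\Gamma_1$ is itself a $1$-skeleton of a cell decomposition of $\bar M$, and $f_1 = l$.

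The key step is to rule out any vertex of $\mathcal N(u)$ other than $x$. Suppose $y \in \mathcal N^2(u) \setminus \{x\}$; then $\deg_{\mathcal N(u)}(y) = 2\ord_y(u) \geqslant 4$. By the defining property of $\Gamma_2$ one has $\Gamma_1 \cap \Gamma_2 = \{x\}$ at the level of vertices, so $y$ belongs to exactly one of the subgraphs $\Gamma_1$ or $\Gamma_2$. Since $\mathcal N(u) = \Gamma = \Gamma_1 \cup \Gamma_2$, every edge of $\mathcal N(u)$ incident to $y$ lies in that one subgraph, so $\deg_{\mathcal N(u)}(y)$ equals the degree of $y$ in $\Gamma_1$ or in $\Gamma_2$; but the equality cases above force that degree to be exactly~$2$, a contradiction. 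Hence $\mathcal N^2(u) = \{x\}$.

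With no other vertices, $\Gamma_1$ is a bouquet based at $x$ consisting of $\tfrac12\deg_{\Gamma_1}(x) = l + 1 - \bar\chi$ loops (using the value of $\deg_{\Gamma_1}(x)$ supplied by equality in Lemma~\ref{l22}), while $\Gamma_2$ reduces to a family of arcs joining $x$ directly to the boundary. Since $\mathcal N(u) = \Gamma_1 \cup \Gamma_2$ has this explicit form, no nodal arc has both endpoints on $\partial M$, and all three assertions of the lemma follow. The main point of care in writing this out, rather than a genuine obstacle, is to track the equality case in each step precisely; the substantive input is the identification $\mathcal N(u) = \Gamma$, without which one could not prevent branch vertices of $\mathcal N(u)$ from sitting outside $\Gamma$.
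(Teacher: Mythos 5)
Your proof is correct, and while it follows the same overall strategy as the paper --- forcing every inequality in the chain behind Lemma~\ref{l2} to be an equality and reading off the rigidity --- the central step is organised quite differently. The paper rules out extra vertices in three separate stages: vertices of $\Gamma_1$ other than $x$ are excluded by attaching an auxiliary tree $\Gamma_0$ at such a vertex and showing that $\Gamma_0\cup\Gamma$ would violate the equality in Lemma~\ref{l2}; extra vertices of $\Gamma_2$ are excluded via strictness in~\eqref{a3}; and vertices in \emph{other connected components} of $\mathcal N(u)$ are excluded by a cycle-existence argument combined with the simple connectivity of the faces of $\Gamma$ (the equality case of the Euler inequality). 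You instead extract the single identity $\mathcal N(u)=\Gamma$ from the equality ``number of faces of $\Gamma$ $=$ number of nodal domains'', and this subsumes all three cases at once: any nodal edge outside $\Gamma$, whether hanging off a vertex of $\Gamma$ or lying in a different component, would sit inside an open face and split it into at least two nodal domains. Once $\mathcal N(u)=\Gamma$ is known, the degree-$2$ rigidity in $\Gamma_1$ and $\Gamma_2$ kills any vertex $y\neq x$ because $\deg_{\mathcal N(u)}(y)=2\ord_y(u)\geqslant 4$. This is a genuine simplification and it buys you a cleaner endgame: the explicit form of $\Gamma=\Gamma_1\cup\Gamma_2$ immediately yields the loop count and the absence of boundary-to-boundary arcs, without invoking property~(ii) separately. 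The one place to be slightly more careful in a written version is the justification that a nodal point in the interior of a face forces two nodal domains there: the clean reference is the local structure of Proposition~\ref{BersTheorem} (near any point of $\mathcal N(u)$ in the interior of $M$ the function takes both signs, since a non-trivial homogeneous harmonic polynomial does), rather than the strong maximum principle, which the paper invokes only for the sign alternation on \emph{adjacent} nodal domains. Also note that the equality case of~\eqref{a3} forcing degree $2$ at non-$x$ vertices of $\Gamma_2$ deserves a sentence (each of the $\deg_{\Gamma_2}(x)$ branches of the tree $\Gamma_2$ carries at least one boundary-approaching edge, and a branch vertex of degree $\geqslant 3$ would create a second one), but this is bookkeeping, not a gap.
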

\begin{proof}
The relation in Lemma~\ref{aux} implies that the inequalities in Lemmas~\ref{l2} and~\ref{l22} are equalities. Inspecting the proofs of these lemmas, we see that the graphs $\Gamma_1$ and $\Gamma$, defined in Section~\ref{2nd}, have the following properties:
\begin{itemize}
\item[(i)] all vertices in the subgraph $\Gamma_1$  different from $x$ have degree $2$ in $\Gamma_1$;
\item[(ii)] each face of $\Gamma_1$ contains precisely one boundary component;
\item[(iii)] the number of faces of the graph $\Gamma$ equals the number of nodal domains;
\item[(iv)] the faces of $\Gamma$ viewed as subdomains in the reduced surface $\bar M$ are simply connected (this is a consequence of the equality in the Euler inequality).
\end{itemize}
We claim that there are no vertices apart from $x$ in $\Gamma_1$. Suppose the contrary, and let $y\in\mathcal N^2(u)$ be such a vertex. Since its degree in the nodal graph is at least $4$, by property~(i) there are nodal edges incident to $y$ that do not lie in $\Gamma_1$. We may assume that these edges belong to a tree subgraph $\Gamma_0$ in a connected component of $x$ that does not intersect the graph $\Gamma_1$ except for the vertex $y$. It then also does not intersect the graph $\Gamma_2$. Now consider the subgraph $\Gamma_0\cup\Gamma$. It is straightforward to see that the difference between the number of edges and vertices for this subgraph is strictly greater than the same quantity for $\Gamma$. Now applying the argument in the proof of Lemma~\ref{l2} to the graph $\Gamma_0\cup\Gamma$ instead of $\Gamma$, we obtain a strict inequality for the number of nodal domains in Lemma~\ref{l2} and arrive at a contradiction.

Combining the claim above with the equality in Lemma~\ref{l22}, we see that the graph $\Gamma_1$ consists precisely of the vertex $x$ and $l+1-\bar\chi$ loops. In fact,  there are no vertices apart from $x$ in the graph $\Gamma=\Gamma_1\cup\Gamma_2$. Indeed, the contrary would give a strict inequality in~\eqref{a3} and in Lemma~\ref{l2}. Thus, we conclude that the connected component of $x$ in the nodal graph is precisely the graph $\Gamma$, which consists of one vertex $x$, a number of nodal arcs joining it with the boundary, and $l+1-\bar\chi$ loops.

Now we show that there are no vertices in the whole nodal graph $\mathcal N(u)$. Suppose the contrary: there is another vertex, which has to belong to a different connected component of $\mathcal N(u)$. Then this connected component viewed as a subset of $\bar M$ has to lie in a face of $\Gamma$. Denote by $\Gamma_*\subset\bar{\mathcal N}(u)$ the image of this connected component in the reduced nodal graph. We claim that $\Gamma_*$ contains a cycle. Then, since by  property~(iv) the faces of $\Gamma$ in the reduced surface are simply connected, we conclude that the number of nodal domains is strictly greater than the number of faces of $\Gamma$, and arrive at a contradiction with property~(iii). The existence of a cycle in $\Gamma_*$ follows from the existence of a subgraph whose every vertex has degree at least two; such a subgraph then has to contain a cycle, see Section~\ref{prems}. Indeed, if the connected component does not have edges approaching the boundary, then it can be taken as such a subgraph. If otherwise, by property~(ii) its edges can approach only one boundary component; that is, the one that lies in the same face of $\Gamma$.  If the boundary component vertex has degree at least two in $\Gamma_*$, then every vertex of $\Gamma_*$ has degree at least two, and we are done.  If it has degree one, then we remove the corresponding incident edge and the boundary component vertex from $\Gamma_*$. The result is a non-trivial subgraph of $\Gamma_*$ whose every vertex has degree at least two. 

Finally, the statement that there are no nodal arcs with both ends on the boundary is a direct consequence of property~(ii). 
\end{proof}

\subsection{Isotopy of nodal graphs}
\label{isotopy}
Since $M$ is not homeomorphic to a disk, we have $l+1-\bar\chi\geqslant 1$, and by Lemma~\ref{aux} the nodal graph $\mathcal N(u_0)$ has at least one loop. Besides, it also has 
$$
2\ord_x(u_0)-2(l+1-\bar\chi)=k+1\geqslant 2
$$
nodal arcs incident to $x$ and approaching the boundary. Now we explain an isotopy argument, showing how the existence of at least one loop and at least one arc in $\mathcal N(u_0)$ leads to a contradiction. Following the idea in~\cite{HoNa1,HoNa}, we construct an isotopy of the nodal graph $\mathcal N(u_0)$ to itself that deforms a nodal loop to a nodal arc. We start with the family of eigenfunctions
\begin{equation}
\label{deform}
u_t=u_0\cos nt+u_1\sin nt,\qquad\text{where }t\in[0,\pi].
\end{equation}
It is straightforward to see that the family $\mathcal N(u_t)$ defines an {\em isotopy of nodal graphs} in the sense of~\cite{HoNa}; that is, a family of graphs such that every nodal arc deforms smoothly among embedded arcs in the interior of $M$, and vertices do not change their multiplicity. More precisely, all graphs $\mathcal N(u_t)$ have only one vertex at the same point $x$, and the number of nodal domains of $u_t$ is maximal, that is equal to $k+1$. The fact that the arcs deform smoothly follows from the implicit function theorem. We claim that under such an isotopy loops deform into loops. Indeed, by Lemma~\ref{aux} the number of loops in $\mathcal N(u_t)$ is constant and is equal to $l+1-\bar\chi$. The claim follows from the fact that the property of a nodal arc to be a loop is open in time $t$.

By Proposition~\ref{BersTheorem} we can assume that the eigenfunctions $u_0$ and $u_1$ in geodesic polar coordinates centered at $x$ have the form
\begin{equation*}
\begin{split}
u_0 = r^n\sin n\varphi + O(r^{n+1});\\
u_1 = r^n\cos n\varphi + O(r^{n+1}).
\end{split}
\end{equation*}
The deformation $u_t$ then takes the form
$$
u_t = r^n\sin n(\varphi+t) + O(r^{n+1}).
$$ 
The nodal set of $u_0$ around $x$ is diffeomorphic to the union of $2n$ straight rays meeting at the origin; they satisfy the equations $\varphi=\varphi_j:=(j\pi)/n$, where $j=0,\ldots,2n-1$. Performing a rotation in polar coordinates $(r,\varphi)$, we may assume that the rays with the angles $\varphi_0$ and $\varphi_j$ for some $j$ are contained in a nodal loop and a nodal arc respectively. The nodal sets of $u_0$ and $u_t$ at $t=\varphi_j$ coincide, and the deformation given by~\eqref{deform} with $t\in [0,\varphi_j]$ is an isotopy of the nodal graph $\mathcal N(u_0)$ to itself. This isotopy transforms the ray $\varphi_0$ to the ray $\varphi_j$. Thus, we see that a loop transforms to an arc, and arrive at a contradiction.
\qed

\section{Asymptotic multiplicity bounds}
\subsection{Proof of Theorem \ref{as1}} 
Consider a ``weighted'' Dirichlet-to-Neumann operator on $\partial M$ that sends
$$
C^\infty(\partial M)\ni u\longmapsto\rho^{-1} \frac{\partial \hat u}{\partial\nu}\in C^\infty(\partial M),
$$ 
where $\hat u$ denotes the unique harmonic extension  of $u$ into $M$. When $\rho$ is smooth and positive, it defines a self-adjoint elliptic pseudo-differential operator of the first order whose eigenvalues are precisely the Steklov eigenvalues, see~\cite[pp. 37-38]{Ta} and~\cite{Ros}. Let $N(\lambda)$ be its eigenvalue counting function; it equals the number of eigenvalues counted with multiplicity that is strictly less than a positive $\lambda$. By H\"ormander's theorem~\cite{Ho}, see also~\cite{Shubin}, the function $N(\lambda)$ satisfies the following asymptotics (Weyl's law): 
\begin{equation}
\label{Weyl}
N(\lambda)=\frac{\lambda}{2\pi} \int_{\partial M} \rho(s)\, ds_g  + R(\lambda),
\end{equation}
where $R(\lambda)$ is a bounded quantity in $\lambda>0$. Using this formula, we obtain
\begin{multline*}
m_k(g,\mu)=\lim_{\epsilon \to 0} N(\lambda_k+\varepsilon)-N(\lambda_k)=\\ \lim_{\varepsilon \to 0} \frac{\varepsilon}{2\pi} \, \int_{\partial M} \rho(s)\, ds_g + 
R(\lambda_k+\epsilon) -R(\lambda_k)\leqslant 2 \sup\abs{R(\lambda)}.
\end{multline*}
Thus, the multiplicity $m_k(g,\mu)$ is indeed bounded, and the theorem is proved.\qed

It is interesting to know up to what extent the bound on $m_k(g,\mu)$ depends on a metric and a boundary measure; in particular, whether there exists a universal constant (possibly depending on the genus of $M$) for which Theorem~\ref{as1} holds.  

\subsection{Proof of Proposition~\ref{as2}} By the uniformisation theorem, we may assume that $M$ is a unit disk and the metric $g$ on $M$ is conformal to the Euclidean metric $g_{\mathit Euc}$. Since the Dirichlet energy is conformally invariant, by the variational principle we see that the Steklov eigenvalues of $(M,g)$ with a weight function $\rho$ coincide with the Steklov eigenvalues of $(M,g_{\mathit Euc})$ with the a new weight function $\rho_0$ that depends on $\rho$ and the values of $g$ on $\partial M$ only.
By the results in~\cite{Ros, Ed} the latter satisfy the following refinement of Weyl's asymptotic formula:
\begin{equation}
\label{ros}
\sigma_{2k}=\frac{2\pi\, k}{\int_{\partial M} \rho_0(s) \, ds} +o(k^{-\infty}), \quad \sigma_{2k+1}=\frac{2\pi\, k}{\int_{\partial M} \rho_0(s) \, ds} +o(k^{-\infty}),
\end{equation}
as $k \to \infty$. Thus, we conclude that for a large $k$ the multiplicity of the eigenvalue $\sigma_k$ is two at most. \qed

We end with two remarks. First, note that for a Euclidean disk all non-zero eigenvalues have multiplicity two, and therefore, the statement of Proposition~\ref{as2} is sharp.  Second, the hypotheses of Proposition~\ref{as2} on the smoothness of $\partial M$ and $\rho>0$ are essential for  the asymptotic formula~\eqref{ros} to hold.  Even for domains with piecewise smooth boundaries the asymptotic properties of the spectrum may be quite different. In particular, by a direct computation one can show that formulas~\eqref{ros} fail for a  square: for a large $k$  the Steklov spectrum of a square is the union of quadruples of eigenvalues,  such that in each quadruple the eigenvalues are $o(k^{-\infty})$-close \cite{Gir}. However, no counterexample to Proposition \ref{as2} is known for simply-connected surfaces with non-smooth boundaries, and it would be interesting to understand whether the result holds in this case as well.

\section{Other boundary value problems}
\label{other}
\subsection{Eigenvalue problems with homogeneous boundary conditions}
\label{sec:robin}
The method used to prove the first inequality in Theorem~\ref{t1} relies only on  Courant's nodal domain theorem and the behaviour of eigenfunctions in the interior of $M$; it largely disregards their behaviour on the boundary. The purpose of this section is to show that it applies to rather general boundary value problems.

Let $(M,g)$ be a compact Riemannian surface with boundary and $L=(-\Delta_g)+V$ be a Schr\"odinger operator, where $V$ is a smooth potential.  Denote by $B$ a boundary differential operator of the form
\begin{equation}
\label{robin_boundary}
Bu=au+b\frac{\partial u}{\partial v},
\end{equation}
where $a$ and $b$ are bounded functions on $\partial M$ that do not vanish simultaneously. We consider the following eigenvalue problem
\begin{equation}
\label{eq1}
Lu=\lambda u\quad\text{in }M,\quad\text{and}\quad Bu=0\quad\text{on }\partial M.
\end{equation}
It is often referred to as the Robin boundary value problem; the Dirichlet and Neumann problems are its special cases. By
$$
\lambda_0 < \lambda_1\leqslant\ldots\lambda_k\leqslant\ldots
$$
we denote the corresponding eigenvalues, where $\lambda_0$ is the bottom of the spectrum.

The following statement gives a bound for the eigenvalue multiplicities of problem~\eqref{eq1} that is independent of a Schr\"odinger operator $L$ and, more interestingly, of a boundary operator $B$.
\begin{prop}
\label{robin}
Let $M$ be a compact Riemannian surface with a non-empty boundary. Then for any Schr\"odinger operator $L$  and any Robin boundary operator $B$ given by~\eqref{robin_boundary}  the multiplicity $m_k$ of an eigenvalue $\lambda_k$ corresponding to problem~\eqref{eq1} satisfies the inequality
\begin{equation}
\label{in11}
m_k\leqslant 2(2-\bar\chi)+2k+1,
\end{equation}
for all $k=1,2,\ldots,$ where $\bar\chi=\chi+l$, and $\chi$ and $l$ stand for the Euler-Poincar\'e number and the number of boundary components of $M$ respectively. 
\end{prop}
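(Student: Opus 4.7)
The plan is to adapt to the Robin setting the three-step proof of inequality~\eqref{in1} of Theorem~\ref{t1}. The three ingredients required are: (a) Courant's nodal domain theorem for problem~\eqref{eq1}, (b) an analog of Lemma~\ref{l4} producing an eigenfunction with prescribed interior vanishing order, and (c) an analog of Lemma~\ref{l3} bounding the number of nodal domains from below by the interior vanishing orders. Given these, the proof is completed by the same contradiction argument as in Theorem~\ref{t1}: if $m_k\geqslant 2(2-\bar\chi)+2k+2=2n$ with $n=k+3-\bar\chi\geqslant 2$, then Lemma~\ref{l4} produces an eigenfunction $u$ of~\eqref{eq1} with $\mathrm{ord}_x(u)\geqslant n$ at a chosen interior point $x$, whereupon (c) forces $u$ to have at least $n-1+\bar\chi=k+2$ nodal domains, contradicting Courant's bound.

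Ingredients (a) and (b) transfer without difficulty. Courant's theorem follows from the variational characterization of the eigenvalues of~\eqref{eq1}, exactly as in the Steklov case. For (b), the key input is Proposition~\ref{BersTheorem}, which is a classical local interior statement valid for solutions of any second-order linear elliptic equation, and so applies to eigenfunctions of $Lu=\lambda u$ regardless of the boundary operator $B$. The proof of Lemma~\ref{l4} uses only Proposition~\ref{BersTheorem} together with the two-dimensionality of the space of homogeneous harmonic polynomials of each positive degree on $\mathbf R^2$, and therefore carries over verbatim. Similarly, the finiteness of the reduced nodal graph $\bar{\mathcal N}(u)$ in $\bar M$ (the analog of Lemma~\ref{l1}) transfers without change, since its proof invokes only Courant's theorem and the Euler inequality.

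The main technical content lies in ingredient (c). The argument of Lemma~\ref{l3} rests on one Steklov-specific fact, namely that every boundary-component vertex of $\bar{\mathcal N}(u)$ has degree at least two; in the Steklov setting this follows from the observation that an eigenfunction changes sign an even number of times around each boundary component, but for a general Robin operator it can fail (as already for a radial Dirichlet eigenfunction on an annulus). To circumvent this, I would restrict attention to the connected component $\Gamma$ of the chosen interior vertex $x$ in $\bar{\mathcal N}(u)$ and apply the Euler inequality in $\bar M$ to $\Gamma$; a direct degree count yields
\[
f(\Gamma)\;\geqslant\;\mathrm{ord}_x(u)-1+\sum_{y\neq x}\bigl(\mathrm{ord}_y(u)-1\bigr)+\sum_{p_i\in\Gamma}\bigl(\tfrac{d_i}{2}-1\bigr)+\bar\chi,
\]
in which each interior term is non-negative (since $\mathrm{ord}_y(u)\geqslant 2$ at each interior vertex) and each boundary term is bounded below by $-\tfrac{1}{2}$ (since every $p_i\in\Gamma$ is connected to $x$ and so has $d_i\geqslant 1$). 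The main obstacle, and the main deviation from the Steklov case, is to absorb the possible negative contributions coming from boundary-component vertices of degree exactly one. I would do this by iteratively pruning each such vertex together with its unique incident edge: both sides of a dangling edge at a degree-one boundary vertex lie in the same face of $\Gamma$ in $\bar M$, so pruning preserves the face count, and after finitely many steps the surviving subgraph contains $x$ and has every boundary-component vertex of degree at least two. Re-applying the degree-count argument of Lemma~\ref{l3} to the pruned subgraph then gives the clean inequality $f(\Gamma)\geqslant\mathrm{ord}_x(u)-1+\bar\chi$, completing the contradiction and the proof. Verifying that this pruning process indeed terminates with the desired face-preservation and degree properties is the step I expect to require the most care.
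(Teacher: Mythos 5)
Your overall route is the paper's: verify that Courant's theorem, the finiteness of the reduced nodal graph, Lemma~\ref{l4} and Lemma~\ref{l3} all transfer to problem~\eqref{eq1}, and then run the contradiction argument for~\eqref{in1} unchanged. Ingredients (a), (b) and the finiteness statement are handled exactly as in the paper. The one place where you deviate is ingredient (c), and that is where the proposal has a genuine gap. The premise for the detour is mistaken: the fact that every boundary component vertex of $\bar{\mathcal N}(u)$ has degree at least two is not Steklov-specific. A boundary component $C$ enters the reduced graph as a vertex only if some nodal edge has an end approaching it; choosing a circle $\alpha$ parallel to $C$, close enough that the annulus between $\alpha$ and $C$ contains no interior vertices and meets only the finitely many edges approaching $C$, and transversal to them, one finds that $u$ restricted to $\alpha$ changes sign at each of its finitely many zeros, so the number of those zeros is even; counting ends of the nodal arcs inside the annulus then shows that the number of edge-ends approaching $C$ is even, hence at least two. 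This is a parity argument on a curve in the \emph{interior} of $M$ and uses nothing about the boundary operator. Your proposed counterexample does not exhibit a degree-one boundary vertex: for a radial Dirichlet eigenfunction on an annulus no nodal arc approaches the boundary, so the boundary components are simply not vertices of $\bar{\mathcal N}(u)$. Hence Lemma~\ref{l3} carries over word for word, as the paper asserts.

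The pruning workaround you substitute for it does not close the gap you yourself flag. Deleting a degree-one boundary vertex together with its unique incident edge lowers the degree of the edge's other endpoint, which may be the distinguished vertex $x$, and the cascade of deletions can strip off many edges at $x$. The degree count on the pruned graph then bounds the number of faces from below in terms of the surviving degree of $x$, not in terms of $2\ord_x(u)$; in the hypothetical extreme case where every edge at $x$ runs to a distinct degree-one boundary vertex, pruning leaves only the vertex $x$ and the inequality $f(\Gamma)\geqslant \ord_x(u)-1+\bar\chi$ does not follow. So the step you identify as requiring the most care is precisely the one that fails as described; the correct resolution is not to prune but to observe that degree-one boundary component vertices do not occur.
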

For the Dirichlet and Neumann eigenvalues on simply connected domains, the estimate~\eqref{in11} is due to~\cite{Na}. In this case, the bound is sharp for $k=1$. The method used in~\cite{Na} does not extend to arbitrary Riemannian surfaces with boundary.

\subsection{Details on the proof}
We explain how the arguments and results in Sections~\ref{prems} and~\ref{proofs} could be extended to prove Proposition~\ref{robin}. First, Proposition~\ref{BersTheorem} holds for solutions of second order elliptic differential equations with smooth coefficients. In particular, it holds for eigenfunctions of problem~\eqref{eq1}. Thus, the nodal set of an eigenfunction has a similar graph structure. These eigenfunctions also enjoy Courant's nodal domain theorem, see~\cite{CH}, and the arguments in Section~\ref{proofs} show that their nodal graphs are finite. A version of Proposition~\ref{BersTheorem} also implies that the statement of Lemma~\ref{l4} holds for solutions of general second order elliptic equations, cf.~\cite[Lemma~4]{Na}. The rest of the proof of the first inequality in Theorem~\ref{t1} carries over without changes.

Finally, let us mention that inequality~\eqref{in11} is also valid for eigenvalue problems with mixed boundary conditions. In addition, one can also allow non-smooth boundaries as long as the eigenvalue problem remains well-posed and Courant's nodal domain theorem holds. 

\subsection*{Acknowledgments} The authors would like to thank Alexandre Girouard for useful comments. MK is grateful to his supervisor Alexei Penskoi for inspiring discussions on spectral geometry. The research of IP was partially supported by NSERC, FQRNT and the Canada Research Chairs program.  The research of MK was partially supported by the Simons Fellowship and the Dobrushin Fellowship. Part of this project was completed while GK and MK were visiting the Centre de recherches math\'emati\-ques in Montr\'eal. Its hospitality is gratefully acknowledged.
{\small

}

\end{document}